\title{Affine Coxeter Extensions of the Two-Holed \\Projective Plane}
\renewcommand{\mathbf}{\bm}
\newcommand{\R}{\mathbb{R}}
\newcommand{\Isom}{\operatorname{Isom}}
\newcommand{\RP}{\mathbb{R}P}
\newcommand{\Z}{\mathbb{Z}}
\newcommand{\semidirect}{\ltimes}
\newcommand{\SO}{\operatorname{SO}}
\newcommand{\SL}{\operatorname{SL}}
\newcommand{\C}{\mathbb{C}}
\newcommand{\F}{\mathbb{F}}
\newcommand{\bmat}[1]{\ensuremath{\begin{bmatrix} #1   \end{bmatrix}}}
\newcommand{\GL}{\operatorname{GL}}
\newcommand{\Aff}{\operatorname{Aff}}
\newcommand{\Ad}{\operatorname{Ad}}
\renewcommand{\H}{\operatorname{\mathbb{H}}}
\newcommand{\Aut}{\operatorname{Aut}}
\newcommand{\tr}{\operatorname{tr}}
\renewcommand{\P}{\operatorname{\mathbb{P}}}
\newcommand{\PSL}{\operatorname{PSL}}
\newcommand{\Axis}{\operatorname{Axis}}
\newtheorem{thm}{Theorem}[section]
\newtheorem{exx}[thm]{Exercise}
\newtheorem{lem}[thm]{Lemma}
\newtheorem{prop}[thm]{Proposition}
\newcommand{\defeq}{\mathrel{\mathop:}=}
\author{William M. Goldman and Gregory D. Laun}
\newcommand{\spine}{\operatorname{Spine}}
\begin{document}
\maketitle

\begin{abstract}
A Margulis spacetime is a complete flat affine Lorentzian 3-manifold with free fundamental group.  Associated to $M$ is a noncompact complete hyperbolic surface $\Sigma$.  
We study proper affine actions of the double extension of $\pi_1 (M) \cong \pi_1 (\Sigma)$ when $\Sigma$ is homeomorphic to a projective plane minus two discs.   We classify such actions and show that there exist proper actions that do not admit crooked fundamental domains.
\end{abstract}

The goal of this paper is to classify affine orbifolds that are double-covered by a Margulis spacetime whose associated hyperbolic surface is homeomorphic to a two-holed cross surface (topologically, a plane minus two discs).

This is part of a larger project to study proper actions of  non-solvable discrete groups on affine spaces.  The case of a free group acting properly on $\R^3$ without fixed points is now fairly well understood.  The quotients of such an action are geodesically complete affine three-manifolds called \emph{Margulis spacetimes}.  Margulis spacetimes arise as infinitesimal deformations of hyperbolic surfaces with free fundamental group and such that the deformation uniformly lengthens or  shortens all closed geodesics~\cite{Goldman2009,goldman2000flat}.  In this way, each Margulis spacetime is canonically associated with a non-compact complete hyperbolic surface.  If $\Sigma$ is such a surface, and $M$ is the associated Margulis spacetime, we say that $M$ is an $\emph{affine deformation}$ of $\Sigma$.  Similarly, we say that the holonomy representation of $\pi_1 (M)$ is an affine deformation of the holonomy representation of $\pi_1 (\Sigma)$.
 
In this paper, we study proper actions by the involution group $\Psi \defeq \Z_2 \ast \Z_2 \ast \Z_2$.  As an abstract group, $\Psi$ naturally contains a free group $\F_2$ of rank two as an index two subgroup.  Moreover, every irreducible representation $\rho_0: \F_2 \to \Isom (\H^2)$ into the isometries of the hyperbolic plane admits a unique extension to a representation $\Psi \to \SL (2,\C)$, which we call a \emph{Coxeter extension}.

The case of orbifold quotients of affine actions is less well-studied than the manifold case.  Charette~\cite{charette2009groups} investigated affine deformations of reflection groups when the index two subgroup is the holonomy group of a three-holed sphere.  She showed the that there exist involution groups that act properly on affine three space which do not admit crooked fundamental domains, but whose index-two subgroups do admit crooked fundamental domains.  This is in contrast to the case of Margulis spacetimes: every proper action of a free discrete group of affine transformations on $\R^3$ admits a crooked fundamental domain~\cite{danciger2014margulis}.
The present paper is the first to investigate orbifold quotients when the corresponding hyperbolic manifold is non-orientable.

The basic strategy of this paper is similar to~\cite{CDG1HT}.   Let $\Sigma$ be a two-holed projective plane.  Given a hyperideal triangulation of $\Sigma$, we can use the theory of crooked ideal triangulations from~\cite{Burelle2012, CDG1HT} to realize this hyperideal triangulation as a configuration of crooked planes.

This process parametrizes the deformation space associated with a particular ideal triangulation of $\Sigma$.  In order to describe the full proper affine deformation space, we also need to describe how the deformation space changes when we change ideal triangulations.  Following~\cite{CDG1HT}, we use the \emph{flip graph} to encode an algebraic structure on the space of ideal triangulations on $\Sigma$.  By~\cite{danciger2014margulis} the image of the deformation space is actually the dual complex to the flip graph, the arc complex.  

In~\cite{charette2011finite}, it was shown that every proper affine deformation of a hyperbolic two-holed cross surface admits a crooked fundamental domain.  Moreover, the projectivized space of crooked fundamental domains was shown to be a quadrilateral $\mathbf{Q}$ in $\P ( H^1 (\Gamma_0,\R^{2,1})) \cong \RP^2$. The main goal of the present paper is to describe the projectivized space of crooked fundamental domains for a Coxeter extension of such a group.  Up to a choice of geodesic representatives for the linearized domain, this space is hexagon $\mathbf{H}$ inscribed in $\mathbf{Q}$.

Let $\Sigma$ be homeomorphic to a two-holed cross surface, and let $\pi \defeq \pi_1 (S)$ be the image of a choice of holonomy representation.  Let $\pi^{\prime}$ be a Coxeter extension of $\pi$. Choose a triangular fundamental domain for the action of $\pi^{\prime}$ on $\H^2$ such that the sides of the triangle are pairwise ultraparallel.  Two sides are determined by the two reflections that generate $\pi^{\prime}$: they are necessarily the fixed geodesics of these reflections.  The homotopy class of the remaining arc is fixed, but  there is an interval's worth of choice in the geodesic representative of this class.

Fix some choice of geodesic representative by picking a $\theta$ in the interval.  Call the corresponding fundamental domain $\mathscr{D}_{\theta}$.  Every other triangular fundamental domain for $\pi^{\prime}$ (with possibly a different set of generators) is given by $\tau \cdot \mathscr{D}_{\theta}$ for some $\tau$ a mapping class of $\Sigma$ and some choice of $\theta$.  Every crooked fundamental domain for a proper affine deformation of $\pi^{\prime}$ linearizes to a fundamental domain of the form $\tau \cdot \mathscr{D}_{\theta}$.  In what follows, we fix the parameter $\theta$.
\begin{thm}
\label{main}
The the space of proper affine deformations of $\pi^{\prime}$ that admit a crooked fundamental domain for fixed $\theta$ is a six-sided cone over the moduli space of hyperbolic structures on the orbifold quotient of $S$.  The cone projectivizes to a hexagon $\mathbf{H}$ inscribed in $\mathbf{Q}$.
\end{thm}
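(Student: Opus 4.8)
\emph{Strategy of proof.} The plan is to follow the strategy of \cite{CDG1HT,danciger2014margulis}, adapting it to the orbifold and non-orientable setting. First I would fix the Coxeter extension $\Gamma_0 = \rho_0(\pi^{\prime})$ together with the triangular fundamental domain $\mathscr{D}_{\theta} \subset \H^2$: its three sides are pairwise ultraparallel geodesics $\gamma_1,\gamma_2,\gamma_3$, where $\gamma_1,\gamma_2$ are the mirrors of the generating reflections and $\gamma_3$ is the chosen geodesic representative (determined by $\theta$) of the remaining arc class. An affine deformation is recorded by a translational cocycle, hence a class in $H^1(\Gamma_0,\R^{2,1})$, and a proper such deformation admits a crooked fundamental domain linearizing to $\tau\cdot\mathscr{D}_{\theta}$ for this fixed ideal triangulation precisely when the associated equivariant configuration of crooked planes --- one crooked plane for each side in the $\Gamma_0$-orbit of $\{\gamma_1,\gamma_2,\gamma_3\}$, with a crooked half-space along each mirror side --- can be chosen pairwise disjoint. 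I would invoke the crooked-ideal-triangulation machinery of \cite{Burelle2012,CDG1HT} to produce such a configuration, so that the deformation space becomes the set of cocycle classes realizing disjointness.

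Second, I would convert disjointness into inequalities. For each pair of sides of $\mathscr{D}_{\theta}$ one obtains, from the Drumm--Goldman disjointness criterion in its sharp ultraparallel form (\cite{Burelle2012,CDG1HT}), a homogeneous linear inequality in the Margulis-type invariants of the cocycle; a priori there is one condition for every pair of adjacent tiles in the $\Gamma_0$-tiling of $\H^2$ by translates of $\mathscr{D}_{\theta}$. Following \cite{danciger2014margulis}, the crucial reduction is that once the finitely many ``local'' conditions around $\mathscr{D}_{\theta}$ hold, a ping-pong / fundamental-domain argument propagates disjointness to all translates. Carrying out this bookkeeping while tracking the extra constraints imposed by the two mirrors --- a crooked half-space must be disjoint from, indeed exchanged with its complement by, the corresponding reflection, and the four inequalities cutting out Charette's quadrilateral \cite{charette2011finite} for the underlying two-holed cross surface must reappear, with the reflection symmetry refining two of them --- I expect exactly six irredundant inequalities to survive.

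Third, I would assemble the conclusions. The six inequalities cut out an open convex cone $\mathscr{C}\subset H^1(\Gamma_0,\R^{2,1})$; genuineness of each of the six facets is verified by exhibiting, for each, a deformation lying on that wall but satisfying the other five strictly, and non-emptiness of the interior follows from a known proper Coxeter deformation. Thus $\mathscr{C}$ is a six-sided cone, and projectivizing gives a convex hexagon $\mathbf{H}\subset \P(H^1(\Gamma_0,\R^{2,1}))\cong\RP^2$. Because each defining inequality of $\mathscr{C}$ either is, or strictly implies, one of the four defining Charette's $\mathbf{Q}$, we get $\mathbf{H}\subseteq\mathbf{Q}$, and checking that the six vertices of $\mathbf{H}$ lie on $\partial\mathbf{Q}$ shows $\mathbf{H}$ is inscribed in $\mathbf{Q}$. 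Finally, since the underlying hyperbolic structure of the orbifold is precisely the datum that varies when $\rho_0$ (equivalently $\mathscr{D}_{\theta}$ up to isometry) moves, the scale-invariant family of all such proper deformations fibers over the moduli space of hyperbolic structures on the orbifold quotient of $S$ with $\mathscr{C}$ as fiber type; this is the asserted six-sided cone over the moduli space. The flip-graph / arc-complex description of \cite{danciger2014margulis} then confirms that allowing $\tau$ to vary does not enlarge the projectivized space beyond $\mathbf{H}$.

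The hard part will be the second step: proving that the infinitely many equivariant disjointness conditions collapse to \emph{precisely} six irredundant inequalities --- neither fewer (all six faces genuine) nor more (no further condition is independent) --- and simultaneously locating these six relative to Charette's four so as to deduce the inscription $\mathbf{H}\subset\mathbf{Q}$. The features peculiar to this case --- the correct notion of a crooked half-space bounded by a crooked plane centered on a mirror geodesic, and the action of the order-two symmetry on the Margulis invariants of a non-orientable surface group --- are exactly where the argument must go beyond \cite{CDG1HT} and will require the most care.
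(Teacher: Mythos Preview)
Your strategy differs from the paper's both in method and in a key structural count, and the latter is a genuine gap.

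Methodologically, the paper does not work backward from disjointness conditions to inequalities on the cocycle. It works forward: crooked fundamental domains linearizing to configuration $I$ are parametrized by the product of stem quadrants $V(w_1)\oplus V(w_2)\oplus V(w_0)$, identified with the open positive orthant in $\R^6$, and this orthant is pushed to $H^1(\Gamma_0,\R^{2,1})\cong\R^3$ by the explicit linear Margulis-invariant map $M$. The heart of the argument is a rank computation on the three $3\times 2$ blocks $M_1,M_2,M_3$ of $M$: one shows $\rk M_1=\rk M_2=2$ but $\rk M_3=1$. The rank drop of $M_3$ is forced by the special geometry of the orbifold triangle --- the side $\ell_0=w_3^\perp$ passes through $p_0=\Axis(X)\cap\Axis(Y)$, so $w_3\cdot(X^0\boxtimes Y^0)=0$ and $\det M_3'=0$.

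Because of this rank drop, the six extreme rays of the orthant map to only \emph{five} distinct rays, and the projectivized image for a single triangulation is a \emph{pentagon} $\mathbf{P}_1$, not a hexagon. The hexagon is obtained only as $\mathbf{H}=\mathbf{P}_1\cup\mathbf{P}_2$, where $\mathbf{P}_2$ comes from the flipped configuration $II$ via the automorphism $\phi$ with $\phi(X)=X$, $\phi(Y)=Y^{-1}$ (which swaps the roles of $A$ and $B$ in the Margulis-invariant formulas). So you have the role of the flip reversed: it is not a final check that varying $\tau$ fails to \emph{enlarge} the region beyond $\mathbf{H}$; it is the mechanism by which one \emph{reaches} all of $\mathbf{H}$. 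If you carried out your inequality count honestly for the single domain $\mathscr{D}_\theta$ in configuration $I$, you would obtain five facets, not six, and the resulting pentagon would miss the vertex of $\mathbf{H}$ on the line $\alpha_{[u]}(B)=0$. Your heuristic ``four inequalities from $\mathbf{Q}$, refined by two mirror constraints, yields six'' does not match what actually happens.
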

Varying $\theta$ gives an octagon instead of a hexagon, as two of the vertices are replaced by intervals.  See Figure~\ref{fig:octagon}.

As a corollary, there are proper affine deformations of $\pi$ that do not admit a crooked fundamental domain; namely those corresponding to points in $\mathbf{Q} \setminus \mathbf{H}$.  The corresponding fact was proved in the context of the three-holed sphere in~\cite{charette2009groups}.

\section{Notation}
\label{sec:notation}

Following Charette, Drumm, and Goldman~\cite{charette2011finite} and John H. Conway, we call the topological surface underlying a projective plane a ``cross surface''.   This is to avoid any confusion with the notion of the projective plane as a space carrying parabolic geometry.

We work in three-dimensional affine space.  Every Margulis spacetime is a quotient $\R^{2,1}/\Gamma$ where $\R^{2,1}$ is Minkowski space and $\Gamma$ is a free discrete subgroup of $\Aff (\R^{2,1}) = O (2,1)\semidirect \R^{2,1}$.  Let $G$ be a free group, and $\phi: G \to \Aff (\R^{2,1})$ the holonomy representation of a Margulis spacetime.  Projection onto the first factor gives a representation into $\SO (2,1) \cong \Isom^{\pm} (\H^2)$.  Call its image $\Gamma_0$.  Then $\Gamma_0$ can be identified with the holonomy group of a hyperbolic surface $\Sigma \defeq \H^2/\Gamma_0$.  In this paper we fix $\Sigma$ to be homeomorphic to a two-holed cross-surface.

Affine deformations $\Gamma$ of a fixed linear $\Gamma_0$ are classified by the cohomology $H^1 (\Gamma_0, \R^{2,1})$.  We  view $[u] \in H^1 (\Gamma_0,\R^{2,1})$ as assigning to each hyperbolic isometry $X \in \Gamma_0 \subset \Isom (\H^2)$ a translational part $u (X) \in \R^{2,1}$.  We call an element of $H^1 (\Gamma_0,\R^{2,1})$ an \emph{affine deformation}.  We call $[u]$ \emph{proper} if the semidirect product $\Gamma$ associated to $[u]$ acts properly on $\R^{2,1}$.  Additional details can be found in~\cite{Charette2010,Burelle2012,CDG1HT}.

We identify $\R^{2,1}$ with the Lie algebra $\mathfrak{psl} (2,\R)$.  The signature $(2,1)$ inner product, denoted by $\cdot$, is given by $1/2$ the trace form:
\[v \cdot u \defeq \frac{1}{2}\tr (vu)\]
We also need the Lorentzian cross product $\boxtimes$, defined  as the unique map satisfying
\[(u \boxtimes v) \cdot w = \det (u,v,w)\]
In particular, $u \boxtimes v$ is Lorentz-orthogonal to both $u$ and $v$.

As a Lie group, $SO (2,1)^0$ is isomorphic to $\PSL (2,\R)$. Since $\Sigma$ is non-orientable, we need orientation-reversing isometries as well.  These can be identified with matrices $iP$, $P \in \GL (2,\R)$, $\det (P) = -1$.  See~\cite{Goldm2009,charette2011finite}.

We identify $\H^2$ with the space of timelike subspaces in $\R^{2,1}$ in the standard way.  That is, each point in $\H^2$ defines a class $[t] \in \P (\R^{2,1} \setminus \left\{ 0 \right\})$ with $t \cdot t < 0$.
For a vector $u \in \R^{2,1}$, let $u^{\perp}$ denote the associated Lorentz-orthogonal subspace.  If $u$ is a spacelike vector, then $u^{\perp}$ is a linear plane that intersects the lightcone transversely, and so may be identified with a hyperbolic geodesic in $\H^2$.  The intersection of $u^{\perp}$ with the lightcone determines two future-pointing unit lightlike vectors $u^{\pm}$, which we think of as points on the ideal boundary $\partial \H^2$.  Specifically, we choose $u^{\pm}$ such that $\left\{ u^-, u^+,u \right\}$ is a right-handed basis of $\R^{2,1}$.  If $v$ and $w$ are spacelike vectors, we say that $v$ and $w$ are \emph{ultraparallel} if the corresponding hyperbolic geodesics defined by $v^{\perp}$ and $w^{\perp}$ are ultraparallel.
We say that spacelike vectors $v_1, v_2, v_3$ are \emph{consistently oriented} if
\begin{itemize}
\item $v_i \cdot v_j < 0$
\item $v_i \cdot v_j^{\pm} \leq 0$
\end{itemize}
whenever $i \neq j$.

Let $X \in \Isom (\H^2)$ be a hyperbolic or parabolic isometry.   The linear map defined by $A \mapsto XAX^{-1} = \Ad_X (A)$ has a 1-eigenspace that is spacelike if $X$ is hyperbolic and lightlike if $X$ is elliptic.  If $X$ is hyperbolic, choose a $1$-eigenvector $X^0$ of $X$ satisfying $ X^0\cdot X^0 = 1$.

Let $(X, u (X)) \in \Gamma$ with $X \in \Gamma_0$ hyperbolic.  The \emph{Margulis invariant} of the affine deformation $(X,u (X))$ is the neutral projection of the translational part $u$:
\[\alpha_{[u]} (X) \defeq u (X) \cdot X^0 \]
The map $g \mapsto \alpha_{[u]} (g)$ depends only on the cohomology class $[u]$ of $u$.  For any nonzero point $p \in \R^{2,1}$, the Margulis invariant of $X$ can be computed as $(XpX^{-1} - p) \cdot X^0$.

Every proper affine action by a non-solvable discrete group on $\R^{3}$ admits a fundamental domain bounded by \emph{crooked planes}~\cite{danciger2014margulis}. A crooked plane is a piecewise-linear surface invented by Drumm~\cite{Drumm1990} to enable ping-pong arguments in $\R^{2,1}$. A crooked plane is determined by a spacelike vector, called its \emph{direction vector}, and a point, called its \emph{vertex}.
Specifically, for a point $p$ and a spacelike vector $v$, define the crooked plane $\mathsf{C} (v,p)$ as follows.  It is the union of two \emph{wings}
\begin{align*}
p &+ \R_{+} v^+  + \R_+ v\\
p &+ \R_+ v^- - \R_+ v
\end{align*}
and a \emph{stem}
\[p + \left\{ x \in \R^{2,1} \mid v \cdot x =0, x \cdot x \leq 0 \right\}\]
By duality the direction vector corresponds to a hyperbolic geodesic $\ell$.  In the language of the Lie group $\PSL (2,\R)$ and its Lie algebra $\mathfrak{psl} (2,\R)$, a crooked plane is the set of all Killing fields with 
a non-repelling fixed point on $\ell$~(cf. \cite{danciger2014margulis}).

In order to build fundamental domains, we need to know when two crooked planes are disjoint.  The following criterion provides this information.
Let $w_1, w_2, w_3$ be unit spacelike vectors defining crooked planes. Let $u_i^{\pm} \in \R_{\geq 0}$ for $i \in \left\{ 1,2,3 \right\}$, and define the points
\begin{align*} 
q_1 &= u_1^{-} w_1^{-} - u_1^{+} w_1^{+}\\
q_2 &= u_2^- w_2^{-} - u_2^{+} w_2^{+}\\
q_{0} &= u_3^{-} w_3^{-} - u_3^+ w_3^{+}\\
\end{align*}
The following proposition follows from~\cite{Charette2010}. 
\begin{prop}
When all the coefficients $u_i^+$, $u_i^-$ are positive, then the crooked planes $\mathscr{C} (w_1,q_1)$, $\mathscr{C} (w_2,q_2)$, and $\mathscr{C} (w_{3},q_3)$ are disjoint.
\end{prop}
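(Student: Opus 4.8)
The plan is to verify that the stated hypotheses put the configuration squarely inside the disjointness criterion for crooked planes established in~\cite{Charette2010}. That criterion associates to each ordered pair of consistently oriented, pairwise ultraparallel spacelike directions $w_i, w_j$ an open region $\mathcal{U}(w_i, w_j) \subset \R^{2,1}$, cut out by finitely many linear inequalities in the null frame $\{w_i^+, w_i^-, w_j^+, w_j^-\}$, with the property that $\mathscr{C}(w_i, p_i) \cap \mathscr{C}(w_j, p_j) = \varnothing$ whenever $p_i - p_j \in \mathcal{U}(w_i, w_j)$. (The criterion needs the $w_i$ to be consistently oriented; this is the standing situation in every application of the proposition, where the $w_i$ are the edge direction vectors of a hyperideal triangle.) So the proof reduces to a sign computation.

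First I would record the normalizations in force: $w_i \cdot w_i = 1$, $w_i^\pm \cdot w_i = 0$, $w_i^+ \cdot w_i^- < 0$, and, from consistent orientation, $w_i \cdot w_j < 0$ and $w_i \cdot w_j^\pm \le 0$ for $i \ne j$. These pin down the sign of every pairing among the vectors $w_1, w_2, w_3$ and $w_1^\pm, w_2^\pm, w_3^\pm$; in particular they force each pair $w_i^\perp, w_j^\perp$ to be ultraparallel, which is what makes $\mathcal{U}$ nonempty. Next, since the hypotheses and the conclusion (pairwise disjointness) are symmetric under permuting the index set $\{1,2,3\}$ together with the data $(w_i, u_i^\pm)$ — and identifying $q_3$ with $q_0$ — it is enough to treat one pair, say $\mathscr{C}(w_1, q_1)$ and $\mathscr{C}(w_2, q_2)$.

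Then I would compute
\[ q_1 - q_2 = u_1^- w_1^- - u_1^+ w_1^+ + u_2^+ w_2^+ - u_2^- w_2^- \]
and pair this vector against each linear functional defining $\mathcal{U}(w_1, w_2)$. Each such pairing expands into a sum of terms of the form $u_i^\pm\,(w_i^\pm \cdot \star)$: the terms with $\star$ in the same block $i$ have definite sign by the normalizations above together with $u_i^\pm > 0$, while the ``cross'' terms with $\star$ in block $j \ne i$ are controlled — either carrying the right sign, or being dominated — by consistent orientation. Because all $u_i^\pm$ are strictly positive, every inequality comes out strict, so $q_1 - q_2$ lies in the open region $\mathcal{U}(w_1, w_2)$ and the two crooked planes are disjoint.

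The delicate point is this last step: the cross terms $w_1^\pm \cdot w_2^\pm$ have signs dictated by the configuration but enter $q_1 - q_2$ with mixed signs, so one must check that they fall on the correct side of every defining inequality of $\mathcal{U}$. An alternative that avoids tracking these by hand is to normalize first: using $\SO(2,1)^0$, put $w_1$ and $w_2$ into a one-parameter standard position for a pair of ultraparallel geodesics, verify the inequalities in that explicit model, and transport the conclusion back by equivariance of the crooked-plane construction under $\Aff(\R^{2,1})$. Finally, I would note that ``disjoint'' here means pairwise disjoint; whether three such crooked planes bound a fundamental domain is a separate matter, taken up where the proposition is applied.
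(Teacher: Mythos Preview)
Your proposal is correct and aligns with the paper's treatment: the paper does not give an independent proof but simply states that the proposition follows from~\cite{Charette2010}, and your sketch is precisely an unpacking of how that citation applies. Your added care about consistent orientation and the pairwise sign analysis goes beyond what the paper records, but is exactly the content of the cited disjointness criterion.
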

The parameters $u_i^{\pm}$ form a translation semigroup, called the \emph{stem quadrant}.  For a spacelike vector $w$, denote the stem quadrant by $V (w)$.  
\[\label{cocycle}V (w) = \R_+^{*} w^{-} - \R_+^{*} w^{+}\]
See~\cite{Burelle2012} for additional details.  The disjointness criterion can also be interpreted in the language of strip deformations, as in~\cite{danciger2014margulis}.

Finally, we need an analog of geodesic reflections in the language of affine deformations.  These are provided by \emph{spine reflections}.  Given a spacelike vector $u$, the corresponding spine reflection is a map $\spine (u) \in \PSL (2,\R)$ defined as: 
\[\spine (u):  v \mapsto -v + 2 \frac{v \cdot u}{u \cdot u}u \]
Charette studied spine reflections in~\cite{charette2009groups}.

\section{The space of hyperideal triangulations}
\label{sec:space-ideal-triang}

In this section, we build fundamental domains for the action of the involution group $\Psi$.

\subsection{A Fundamental Domain for the Action of $\Psi$}
\label{sec:2hx-fund-doma}

Let $\pi \cong \pi_1 (\Sigma)$ be the holonomy of a hyperbolic structure on $\Sigma$.  Then $\pi$ is a free group generated by two glide reflections $X,Y$ that intersect in a distinguished point $p_0$.  Additionally, $\Sigma$ has two boundary components $A,B \in \PSL (2,\R)$ which we can choose so that $A \defeq XY$, $B \defeq Y^{-1}X$.  This gives a redundant presentation 
\[\pi = \langle X, Y, A,B \mid A = XY, B = Y^{-1}X\rangle\]
\begin{figure}
  \centering
  \def\svgwidth{0.55\columnwidth} 
  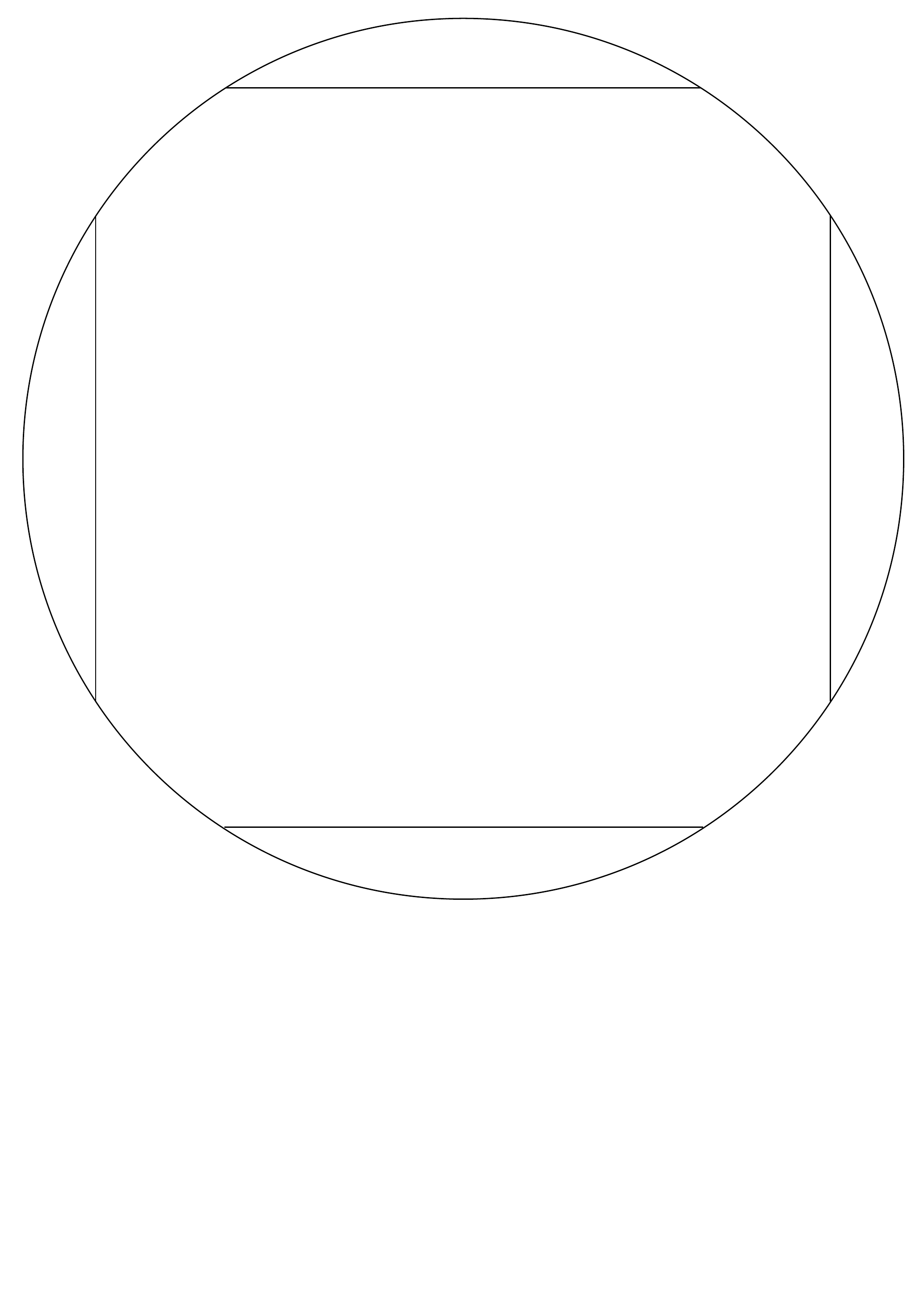
  \caption{A fundamental domain for the convex core of $\Sigma$.  For group elements $g,h$, the notation $g^h$ means $hgh^{-1}$.}
\label{fig:core}
\end{figure}

Let $\iota_0$ be the (orientation-preserving) point symmetry in $p_{0}$.  Then $\iota_0$ reverses the orientation of every geodesic passing through $p_{0}$.  Since $p_{0} = \Axis (X) \cap \Axis (Y)$, $\iota_0 X \iota_0$ is a glide reflection with the same translation distance and axis as $X$, but in the opposite direction. It follows that $\iota_0 X \iota_0  = X^{-1}$.  Similarly, $\iota_0 Y \iota_0 = Y^{-1}$.  Following~\cite{Goldman2009}, we see that $\langle X, Y, \iota_0 \rangle  \cong \Z_2 \ast \Z_2 \ast \Z_2$ is a Coxeter extension of $\pi$.

We can see better the structure of the involution group by defining $R_X \defeq X \iota_0$, $R_Y \defeq \iota_0 Y$.  The index two subgroup $\pi$ can then be recovered as:
\begin{eqnarray*}
\label{eq:index2}
X &=& R_X \iota_0\\
Y &=& \iota_0\ R_Y\\
A &=& R_X R_Y\\
B &=& R_Y \iota_0 R_X \iota_0
\end{eqnarray*}
While $\iota_0$ is a symmetry in a point, $R_X$ and $R_Y$ are reflections in hyperbolic geodesics.  Call these geodesics $\ell_X$ and $\ell_Y$ respectively.  
Then $\ell_Y$ is the mutual perpendicular of $\Axis (A)$ and $\Axis (B)$, and $\ell_X$ is the mutual perpendicular of $\Axis (A)$ and $\Axis (XBX^{-1})$.
 See Figure~\ref{fig:core}.

We depict two types of fundamental domain for the action of this Coxeter group in Figure~\ref{fig:cox-options}. Each is a hyperideal triangle bounded by geodesics that project in the quotient to $\ell_X$, and  $\ell_Y$ and by a third hyperbolic geodesic $\ell_0$ through $p_{0}$. The two triangulations differ by a diagonal flip that sends $\ell_0$ to a the line $\ell_0^{\prime}$ orthogonal to $\ell_0$ through $p_{0}$.
\begin{figure}
  \centering
  \def\svgwidth{0.55\columnwidth} 
  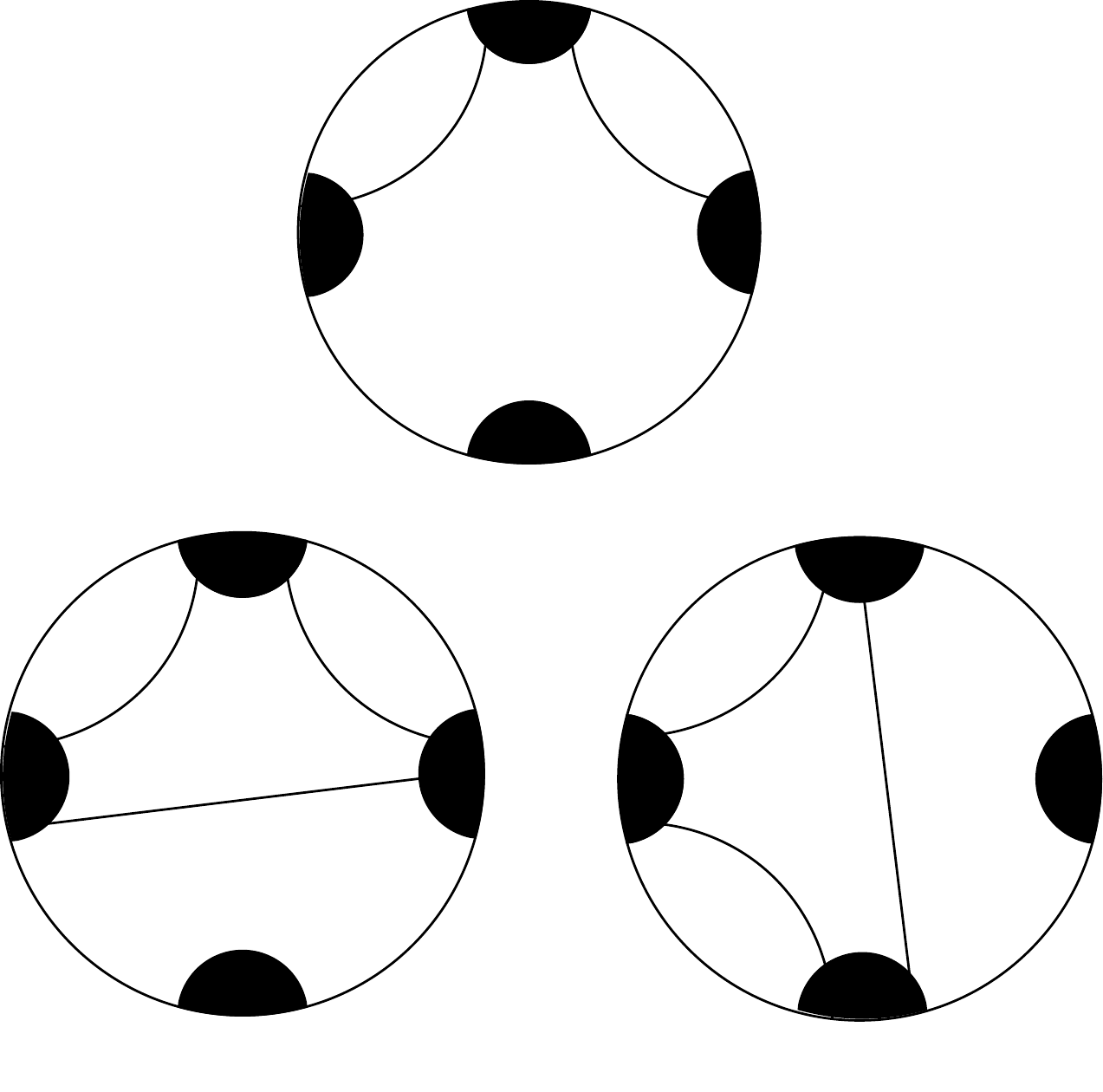
  \caption{The two fundamental domains for the Coxeter group.  Note that $\ell_Y$ and $\ell_Y^{\iota_0} = \iota_0 \ell_Y \iota_0$ both project to the same arc in the quotient.}
  \label{fig:cox-options}
\end{figure}

\subsection{Arc Complex of $\Sigma$}
\label{sec:arc-complex-sigma}

Recall the definition of the \emph{flip graph} of a surface $\Sigma$ with boundary.  The vertices of the flip graph are ideal triangulations of $\Sigma$, and there is an edge between two triangulations if and only if the two differ by a diagonal flip.  For surfaces with fundamental group free of rank 2, the dual complex to the flip graph is the \emph{arc complex}.  The vertices of the arc complex are homotopy classes of properly embedded arcs in $\Sigma$, and $k$ arcs span a simplex if and only if they can be realized disjointly.  The duality expresses the fact that a maximal collection of disjoint arcs defines an hyperideal triangulation of the surface.

Charette, Drumm, and Goldman~\cite{CDG1HT} used the flip graph (although using different language) to parametrize the proper affine deformation space of a one-holed torus.  Danciger, Gu{\'e}ritaud, and Kassel~\cite{danciger2014margulis} generalized this approach to use the arc complex to parametrize the proper affine deformation space of all convex-cocompact surfaces.

The arc complex for the two-holed cross-surface is depicted in Figure 3.  In the diagram, the antipodal points on the circle's boundary are identified.  The black semidisks (antipodally identified) indicate the two removed discs.  For some of the arc classes, we show two representatives for clarity.

An hyperideal triangle corresponds to a top-dimensional simplex of the arc complex (or alternately, to a point in the flip graph).  
In the present paper, we consider the hyperideal triangle of type $I$.  This forms a fundamental domain for the action of $\Psi$. There is an element of the mapping class group that interchanges $I$ with $II$.  Since this induces an automorphism of the fundamental group, $II$ is also a fundamental domain for the Coxeter extension.  The remaining simplices do not arise as fundamental domains for the Coxeter extension.

Algebraically, the flip between the two triangulations is achieved by an automorphism of $\pi$ that fixes $X$ and sends $Y$ to its inverse.  See the proof of~Proposition~\ref{pentagon-flip}.
\begin{figure}
  \centering
  \def\svgwidth{0.55\columnwidth} 
  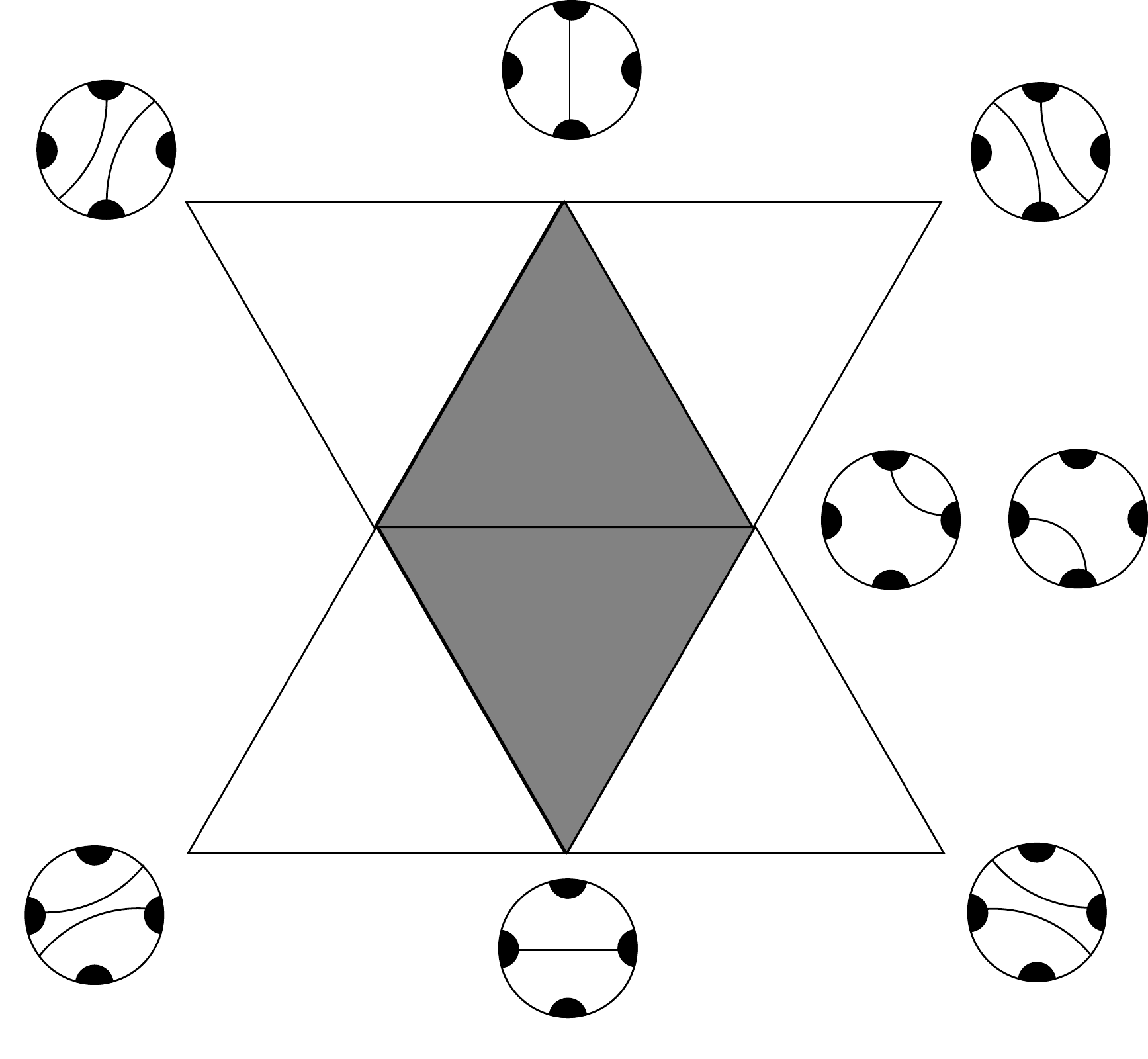 
    \caption{The arc complex of a two-holed cross surface.}
  \label{fig:arc-complex}
\end{figure}

%
\include{diagram.svg}

\subsection{Parametrization of Hyperideal Triangles}
\label{sec:param-ultr-triangl}

In this section, we work in a fundamental domain in configuration $I$.  We first parametrize hyperideal triangles in this configuration.

Without loss of generality, we assume that the axes of $X$ and $Y$ intersect at the origin $p_0 = 0$.  Let $\ell_0$ be a hyperbolic geodesic through $p_0$ making angle $\theta$ with the horizontal axis.
Let $m$ be the common perpendicular between $\ell_X$ and $\ell_Y$.  Let $d$ be the distance between $m$ and $p_0$, and let $u_1$ (respectively $u_2$) be the distance between $m$ and $\ell_X$ (respectively $\ell_{Y})$.

Given $u_1, u_2$, $d$, and $\theta$, define the spacelike vectors 
\[w_{i} =\begin{pmatrix} \cosh u_i\\ \sinh u_{i} \sinh d \\ \sinh u \cosh u_{i}\end{pmatrix}\] 
for $i = 1,2$, and 
\[w_{0} = \begin{pmatrix}\cos \theta\\ \sin \theta\\ 0\end{pmatrix}.\]  With appropriate bounds on the parameters, the geodesics $\left\{ w_i^{\perp} \right\}$ are disjoint and define a hyperideal triangle in configuration $I$.  The group $\Gamma_0 = \rho (\F_2)$ does not depend on $\theta$, only the choice of fundamental domain does.

\begin{figure}
  \centering
  \def\svgwidth{0.55\columnwidth}  
  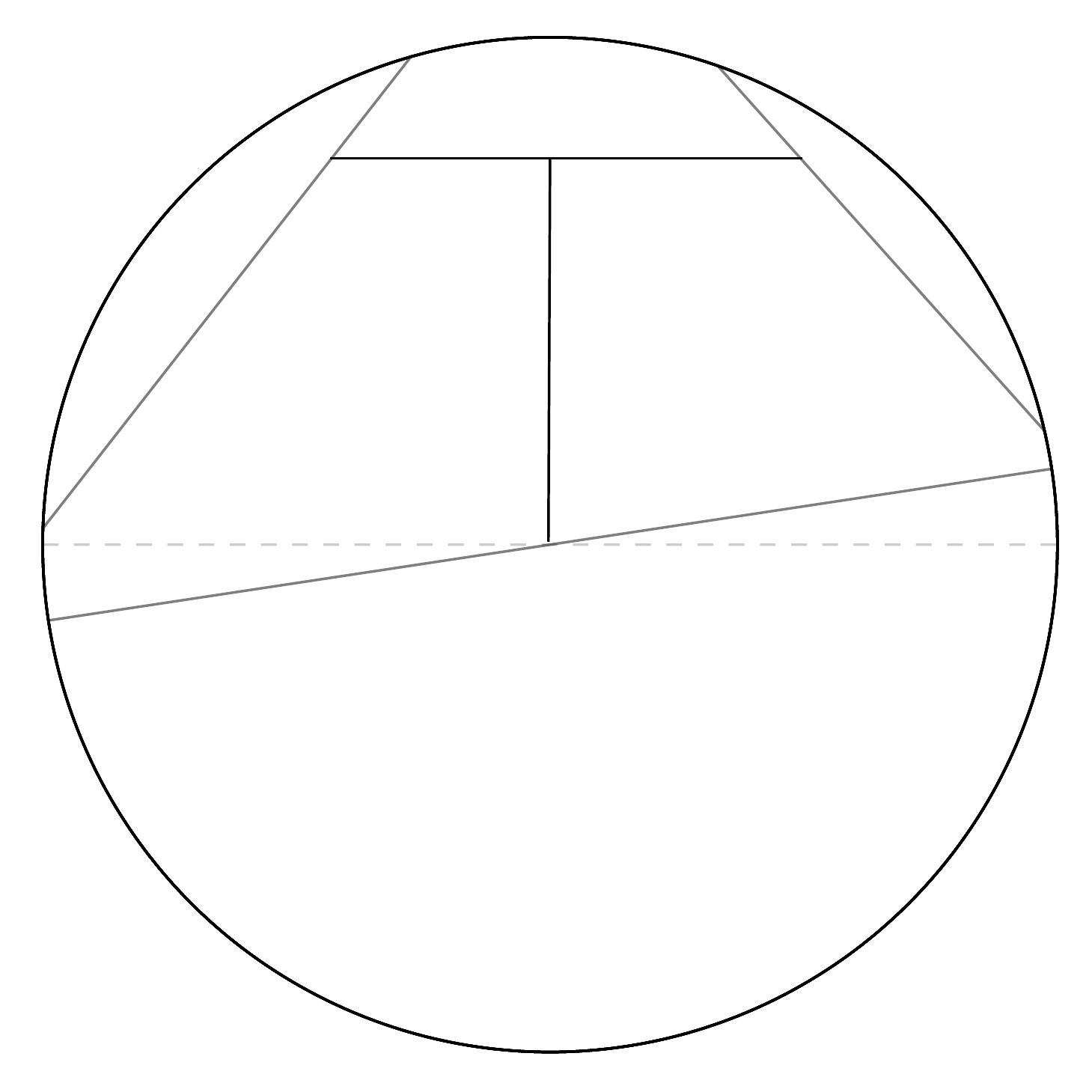 
    \caption{The parameter space for hyperideal triangles in configuration $I$.}
  \label{fig:hyperideal-param}
\end{figure}
Define the spine reflections
\begin{align*}
  \label{eq:spine-refl}
  R_X &\defeq \spine(w_{1})\\
  R_Y &\defeq \spine (w_2)
\end{align*}
and define $\iota_0$ to be the point symmetry in $p_{0}$.
Then $R_X$ and $R_Y$ are glide reflections whose axes intersect at $p_{0}$.

\section{Statement of Theorems}
\label{sec:statement-theorems}

From now on, fix the parameters $d, u_1, u_2, \theta$.  This also fixes $\Gamma_0$.  In order to determine which cocycles $[u] \in H^1 (\Gamma_0, \R^{2,1})$ act properly, we need the following facts about the Margulis invariant.

For fixed $g \in \Gamma$, the Margulis invariant determines a well-defined functional on $H^1 (\Gamma_0,\R^{2,1})$ 
\[\mu_g: H^1 (\Gamma_0,\R^{2,1}) \to \R \]
defined by 
\[\mu_g ([u]) \defeq \mu_{[u]} (g)\]
The invariants $X, Y$, and $A$ determine an isomorphism of vector spaces:
\[H^1 (\Gamma_0,\R^{2,1}) \cong \R^3,\]
given by 
\[[u] \mapsto \bmat{\mu_{[u]} (X)\\ \mu_{[u]} (Y) \\ \mu_{[u]} (A)}\]
 By abuse of notation, for each $g \in \Gamma_0$ denote its image $\rho (g) \in 
\Aff (2,1)$ by $g$ as well.
The following proposition was proved in~\cite{charette2011finite}.
\begin{prop}
\label{CDG2HX}
  Let $\Sigma$ be a two-holed cross surface with holonomy group $\pi = \rho \left (\pi_{1} (\Sigma) \right )$ with presentation as above.  Then an affine deformation $[u] \in H^1 (\Gamma_{0}, \R^{2,1})$ of $\pi$ acts properly on $\R^{2,1}$ if and only if the Margulis invariants $\alpha_{[u]} (X)$,$\alpha_{[u]} (Y)$,$\alpha_{[u]} (A)$, and  $\alpha_{[u]} (B)$ all are nonzero with the same sign.
\end{prop}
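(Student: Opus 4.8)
The plan is to prove the two implications separately: necessity is the standard ``opposite-sign'' argument, and essentially all of the work is in sufficiency. For necessity, suppose $[u]$ is proper. Since $\Gamma_0$ is the holonomy of a convex cocompact hyperbolic surface with geodesic boundary, every nontrivial element of $\Gamma_0$ is either a hyperbolic translation or a glide reflection, and in both cases its linear part has a spacelike $1$-eigendirection, so $\alpha_{[u]}$ is defined on all of $\Gamma_0\setminus\{1\}$, in particular on $X,Y,A,B$. If $\alpha_{[u]}(g)=0$ for such a $g$, then the affine transformation $(g,u(g))$ fixes a point of $\R^{2,1}$ on the affine line through the neutral direction, contradicting that a proper action of a torsion-free group is free; hence the four invariants are nonzero. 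Finally, the opposite-sign lemma of Margulis shows that if $g,h\in\Gamma_0$ satisfy $\alpha_{[u]}(g)>0>\alpha_{[u]}(h)$ then suitable alternating words in $g^{\pm n},h^{\pm n}$ have translational parts confined to a compact set, so $\Gamma$ is not proper. Therefore all Margulis invariants share one sign, and in particular so do $\alpha_{[u]}(X),\alpha_{[u]}(Y),\alpha_{[u]}(A),\alpha_{[u]}(B)$.

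For sufficiency, replace $[u]$ by $-[u]$ if needed and assume all four invariants are positive. Recall from the earlier part of the paper that $\mu_X,\mu_Y,\mu_A$ are linear coordinates on $H^1(\Gamma_0,\R^{2,1})\cong\R^3$, so $\mu_B$ is a linear functional in those coordinates; hence $\{\,\mu_X,\mu_Y,\mu_A,\mu_B>0\,\}$ is an open convex cone, the one projectivizing to the quadrilateral $\mathbf{Q}$. I would show each of its points admits a crooked fundamental domain, so that by Drumm's ping-pong theorem~\cite{Drumm1990} the corresponding affine deformation $\Gamma$ is proper. Concretely: take the hyperideal triangle of configuration $I$ with side vectors $w_0,w_1,w_2$; attach to $w_i$ a vertex $q_i$ chosen in the open stem quadrant $V(w_i)$; by the disjointness Proposition the crooked planes $\mathscr{C}(w_i,q_i)$ and their $\Gamma_0$-orbit are pairwise disjoint and bound a fundamental domain for the action of $\Gamma$ on $\R^{2,1}$. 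The data $q_i$ determine, and up to coboundary are determined by, a cocycle $u$; using the cocycle relation together with $A=XY$ and $B=Y^{-1}X$ one expresses $\alpha_{[u]}(X),\alpha_{[u]}(Y),\alpha_{[u]}(A),\alpha_{[u]}(B)$ explicitly in the stem-quadrant coordinates of $q_0,q_1,q_2$, and then checks that ``all four positive'' is exactly the condition that the $q_i$ can be chosen strictly inside $V(w_0),V(w_1),V(w_2)$. This identifies $\{\,\mu_X,\mu_Y,\mu_A,\mu_B>0\,\}$ with the cone of cocycles carrying a crooked fundamental domain, hence with a subset of the proper cone.

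The main obstacle is precisely the last step of sufficiency: translating the four sign conditions into the stem-quadrant disjointness inequalities and showing they match on the nose, i.e.\ that no hidden inequality is needed and that positivity of these \emph{four} Margulis invariants, rather than of infinitely many curves, already forces a valid crooked configuration. Equivalently, by the Goldman--Labourie--Margulis properness criterion one must know that the normalized Margulis invariant $c\mapsto\alpha_{[u]}(c)/\ell(c)$, over the compact space of geodesic currents on $\Sigma$, attains its infimum among currents carried by $X,Y,A,B$ --- a convexity statement reflecting that these are exactly the four curves meeting the chosen fundamental domain in its defining arcs. Carrying out that comparison, whether directly in the stem-quadrant coordinates or through the current-space convexity, is where the genuine computation lies; everything else is bookkeeping with the cocycle identities and the explicit vectors $w_i$.
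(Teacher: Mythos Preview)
The paper does not prove this proposition at all; it is quoted from \cite{charette2011finite} (see the sentence immediately preceding the statement) and is used only as input for the later computations.

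Your necessity direction is the standard one and is fine as sketched.

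Your sufficiency argument, however, has a genuine gap, and it is exactly the gap this paper is written to expose. You propose to realize every cocycle with $\alpha_{[u]}(X),\alpha_{[u]}(Y),\alpha_{[u]}(A),\alpha_{[u]}(B)>0$ by choosing vertices $q_0,q_1,q_2$ in the stem quadrants $V(w_0),V(w_1),V(w_2)$ of the configuration-$I$ triangle. But that triangle is a fundamental domain for the Coxeter extension $\pi'$, and the map you describe from stem-quadrant parameters to $(\alpha(X),\alpha(Y),\alpha(A))$ is precisely the linear map $M$ computed in Section~\ref{sec:proofs-two-theorems}. Theorem~\ref{pentagon} shows that the image of the open positive orthant under $M$ projectivizes to a pentagon $\mathbf{P}_1$ strictly contained in $\mathbf{Q}$; even after varying $\theta$ and applying the flip $\phi$ one obtains only the hexagon of Theorem~\ref{hexagon}, still a proper subset of $\mathbf{Q}$. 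Hence the claim you single out as ``the main obstacle'' --- that the four sign conditions coincide on the nose with the stem-quadrant inequalities for $w_0,w_1,w_2$ --- is not merely hard but \emph{false}: there are cocycles in $\mathbf{Q}\setminus\mathbf{H}$ which act properly yet cannot be reached by any choice of $q_i\in V(w_i)$.

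The actual proof in \cite{charette2011finite} uses a \emph{quadrilateral} fundamental domain for $\pi$ itself (as in Figure~\ref{fig:core}), bounded by four crooked planes rather than three, and it is that configuration whose stem-quadrant parameters surject onto $\mathbf{Q}$. Your alternative suggestion via the Goldman--Labourie--Margulis convexity on the space of currents is a viable route, but it is a different argument from the crooked-plane construction you wrote out, and it too requires knowing why the extreme points of the normalized Margulis functional on $\Sigma$ are supported on $X,Y,A,B$.
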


A triple of points $q_1, q_2, q_3$ in the stem quadrants $V (w_i)$ as in~(\ref{cocycle}) gives a cocycle $[u] \in H^1 (\Gamma_0, \R^{2,1})$.
Let $v \in \R^6$ be given by $v =
\begin{pmatrix}
  u_1^-\\ u_1^{+} \\   u_2^-\\ u_2^{+} \\   u_3^-\\ u_3^{+} \\
\end{pmatrix}$. 
Define the map $M: \R^6 \to \R^3$ by 
\[Mv \defeq
\begin{pmatrix}
\label{alpha-matrix}
  \alpha_{[u]} (X)\\ \alpha_{[u]} (Y)\\ \alpha_{[u]} (A)
\end{pmatrix}\]
Then $M$ is a linear map $M: \R^6 \to H^1 (\Gamma_0,\R^{2,1}) \cong \R^3$.

By Proposition~\ref{CDG2HX}, the image under $M$ of the positive orthant in $\R^6$ identifies with the space of crooked fundamental domains for the action of the affine deformation of $\pi$ corresponding to $[u]$ for a fixed choice of $\theta$. This image is a cone in $\R^3$.  Projectivized, this cone becomes a polygon in $\RP^2$.

In the present paper, we are interested not in the proper affine deformation space for $\Sigma$, but for its quotient $\Sigma^{\prime}$.  Define $\pi^{\prime}$ to be the Coxeter extension of $\pi$.  Since $\pi^{\prime}$ is a finite extension of a discrete group, it acts properly on $\R^{2,1}$ if and only if $\pi$ does. By~\cite{CDG1HT}, $\pi$ admits a crooked fundamental domain.  However, even if the action of $\pi$ admits a crooked fundamental domain, the action of $\pi^{\prime}$ may not.  In particular, we show

\begin{thm}[Hexagon]
\label{hexagon}
Let $\Gamma_0^{\prime}$ be a Coxeter group whose index two subgroup $\Gamma_0$ is the holonomy group for a two-holed cross surface.  Fix the parameter $\theta$.  This fixes a fundamental domain $\kappa$ for the linear Coxeter group.

Then the projectivized space of crooked fundamental domains for $\Gamma^{\prime}$ that linearize to $\kappa$ is a hexagon $\mathbf{H}$ in $\P (H^1 (\Gamma_0,\R^{2,1})) \subset \RP^2$.  The hexagon $\mathbf{H}$ is inscribed in the quadrilateral $\mathbf{Q}$ that parametrizes the space of crooked fundamental domains for the action of $\Gamma_0$.
\end{thm}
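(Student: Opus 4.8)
The plan is to parametrize crooked fundamental domains for $\Gamma'$ linearizing to $\kappa$ by triples $(q_1,q_2,q_0)$ with $q_i$ in the stem quadrant $V(w_i)$, to re-express the four Margulis invariants $\alpha_{[u]}(X),\alpha_{[u]}(Y),\alpha_{[u]}(A),\alpha_{[u]}(B)$ of the restricted affine deformation as explicit linear functionals of the six parameters $u_1^{\pm},u_2^{\pm},u_3^{\pm}$, to use Proposition~\ref{CDG2HX} together with the disjoint-crooked-plane criterion to cut out a polyhedral cone whose projectivization is $\mathbf{H}$, and finally to count its sides and locate them on $\partial\mathbf{Q}$.

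First I make $\mathbf{Q}$ explicit. By Proposition~\ref{CDG2HX} the proper locus is where $\alpha_{[u]}(X),\alpha_{[u]}(Y),\alpha_{[u]}(A),\alpha_{[u]}(B)$ are all nonzero of a common sign; for the $\Gamma_0$-cocycle attached to a type-$I$ configuration $(q_1,q_2,q_0)$ the first three are positive throughout the positive orthant (the point of the stem-quadrant normalization), so in the coordinates $(\alpha_{[u]}(X),\alpha_{[u]}(Y),\alpha_{[u]}(A))$ on $H^1(\Gamma_0,\R^{2,1})\cong\R^3$ the region $\mathbf{Q}$ is contained in the projectivized first octant, a triangle in $\RP^2$; writing $\alpha_{[u]}(B)$ as a linear form in these coordinates via $B=R_Y\iota_0R_X\iota_0$ one recovers from~\cite{charette2011finite} that $\mathbf{Q}$ is that triangle with one corner truncated along $\{\alpha_{[u]}(B)=0\}$ --- the quadrilateral, whose four edges lie on the lines $\{\alpha_{[u]}(g)=0\}$, $g\in\{X,Y,A,B\}$. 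I record explicit equations for these four lines.

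The heart of the matter is to recompute the four functionals for the Coxeter extension. A triple $(q_1,q_2,q_0)\in V(w_1)\times V(w_2)\times V(w_0)$ gives pairwise disjoint crooked planes $\mathsf{C}(w_i,q_i)$ bounding a crooked hyperideal triangle $\widetilde\kappa$. Because the spine reflection $\spine(w_i)$ acts on Killing fields as conjugation by the geodesic reflection in $w_i^{\perp}$, there is a unique translational part making the affine spine reflection $\widehat R_X=(\spine(w_1),u(R_X))$ preserve $\mathsf{C}(w_1,q_1)$, and similarly for $\widehat R_Y$ and $\mathsf{C}(w_2,q_2)$; the affine point symmetry $\widehat\iota_0=(\iota_0,u(\iota_0))$, being a half-turn, preserves the folded wall $\mathsf{C}(w_0,q_0)=\mathsf{C}(-w_0,q_0)$ for a unique $u(\iota_0)$, which moreover must lie along the single spacelike direction of $w_0^{\perp}\cap p_0^{\perp}$ --- and this is where the fixed angle $\theta$ enters. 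These three generators generate $\Gamma'$, so $\widetilde\kappa$ is a fundamental domain for it, and substituting the resulting $u(R_X),u(R_Y),u(\iota_0)$ into the cocycle identities coming from $X=R_X\iota_0$, $Y=\iota_0R_Y$, $A=R_XR_Y$, $B=R_Y\iota_0R_X\iota_0$, then applying $\alpha_{[u]}(g)=u(g)\cdot g^0$, expresses the four invariants as a linear map $M'\colon\R^6\to H^1(\Gamma_0,\R^{2,1})$ --- different from the map $M$ of Section~\ref{sec:statement-theorems}, because $\Gamma_0$ and $\Gamma'$ permute the crooked planes of $\widetilde\kappa$ differently. I expect this to be the main obstacle: determining exactly which crooked planes the half-turn $\widehat\iota_0$ preserves, and with which translational part, needs careful bookkeeping of orientations and of the position of $q_0$ relative to the plane $p_0^{\perp}$, and that bookkeeping is precisely what produces the $\theta$-dependence.

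With $M'$ in hand, the conditions of Proposition~\ref{CDG2HX} become four homogeneous linear inequalities on $(u_1^{\pm},u_2^{\pm},u_3^{\pm})$ which, together with the positive orthant and the single linear equation forced on $u(\iota_0)$, cut out a polyhedral cone $P$; its image $M'(P)$ is a polyhedral cone projectivizing to $\mathbf{H}$, and $\mathbf{H}\subseteq\mathbf{Q}$ is immediate because $P$ lies in the positive orthant. To finish, I enumerate the extreme rays of $P$ --- obtained by intersecting its defining hyperplanes with the two-dimensional faces of the orthant --- push them through $M'$, and check that exactly six of them survive as extreme rays of $M'(P)$ and that the resulting polygon meets $\partial\mathbf{Q}$ in two opposite vertices of $\mathbf{Q}$ together with one interior point of each of the four edges $\{\alpha_{[u]}(g)=0\}$: thus $\mathbf{H}$ is $\mathbf{Q}$ with two opposite corners truncated, a hexagon inscribed in $\mathbf{Q}$, which is the theorem. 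Redoing the computation with $\theta$ no longer fixed lets the two truncating edges sweep one-parameter families, so two of the six vertices become intervals and $\mathbf{H}$ becomes the octagon of Figure~\ref{fig:octagon}; fibering $P$ over the moduli of the orbifold quotient gives the cone description of Theorem~\ref{main}.
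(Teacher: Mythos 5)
Your setup (stem-quadrant parameters $q_i \in V(w_i)$, the four Margulis invariants as linear functionals of $u_1^{\pm},u_2^{\pm},u_3^{\pm}$, image of a cone in $\R^6$) is the right machinery and matches Lemma~\ref{alpha-compute} and the matrices $M_1,M_2,M_3$. But the core of your argument --- that a \emph{single} polyhedral cone $P$ inside the positive orthant of one copy of $\R^6$ maps onto the hexagon --- cannot work. The image of the full positive orthant under $M$ is already only a \emph{pentagon}: $M_3$ has rank one (because $w_0^{\perp}$ passes through $p_0$, so $w_0\cdot(X^0\boxtimes Y^0)=0$), which collapses $Me_5$ and $Me_6$ to a single extreme ray lying on $\{\alpha_{[u]}(A)=0\}$. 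Your cone $P$ is contained in that orthant --- indeed your extra inequalities from Proposition~\ref{CDG2HX} are automatic there, since points strictly inside the stem quadrants give disjoint crooked planes, hence a proper action, hence same-sign invariants --- so $M'(P)$ is contained in the pentagon $\mathbf{P}_1$ and you will find five extreme rays, not six. The missing idea is that the hexagon is the \emph{union of two pentagons}, one for each of the two hyperideal triangulations $I$ and $II$ of Figure~\ref{fig:cox-options}: the crooked fundamental domains corresponding to points near the edge $\{\alpha_{[u]}(B)=0\}$ are bounded by crooked planes in configuration $II$, and are reached only by applying the flip automorphism $\phi$ (with $\phi(X)=X$, $\phi(Y)=Y^{-1}$, which swaps $A$ and $B$) to obtain a second map $M^{\phi}$ whose rank-one block has image on $\{\alpha_{[u]}(B)=0\}$; the two pentagons share the quadrilateral $\mathbf{Q_{small}}$ and their union is $\mathbf{H}$.

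Relatedly, your claimed combinatorics of the answer is wrong: $\mathbf{H}$ is not ``$\mathbf{Q}$ with two opposite corners truncated'' (two corners of $\mathbf{Q}$ retained plus one interior point of each edge). Its six vertices all lie in the interiors of the edges of $\mathbf{Q}$: two on $\{\alpha_{[u]}(Y)=0\}$ (the images of $e_1,e_2$), two on $\{\alpha_{[u]}(X)=0\}$ (the images of $e_3,e_4$) --- these four being the vertices of $\mathbf{Q_{small}}$ --- and one each on $\{\alpha_{[u]}(A)=0\}$ and $\{\alpha_{[u]}(B)=0\}$ (the collapsed vertices of the two pentagons). No corner of $\mathbf{Q}$ survives in $\mathbf{H}$, and the asymmetric distribution of vertices among the four edges is precisely the trace of the two-triangulation structure that your single-cone argument omits.
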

The hexagon is depicted in Figure~\ref{fig:hexagon}.
Allowing $\theta$ to vary gives an octagon, with the additional sides corresponding to the extra degree of freedom in choosing a geodesic representative for the arc preserved by the point symmetry.  See Figure~\ref{fig:octagon}.

The hexagon is the union of two pentagons.  Each pentagon is the projectivized image of the space of crooked fundamental domains in $\RP^2$ corresponding to a fixed ideal triangulation. The map that flips the two ideal triangulations induces a map on the space of crooked fundamental domains that interchanges the pentagons.  We thus prove Theorem~\ref{hexagon} in two steps.

\begin{thm}[Pentagon]
\label{pentagon}
Define the map $M$ as above.  The image of the positive orthant in $\R^6$ projectivizes to a pentagon $\mathbf{P_1}$ in $\RP^2$.
\end{thm}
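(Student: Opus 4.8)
The plan is to make the linear map $M$ completely explicit and then identify the polyhedral cone it generates. Write $c_j \defeq M e_j \in \R^3$ for the images of the standard basis vectors of $\R^6$: thus $c_1, c_2$ record the Margulis invariants $(\alpha_{[u]}(X), \alpha_{[u]}(Y), \alpha_{[u]}(A))$ of the cocycles obtained by putting the crooked-plane vertex $q_1$ at $w_1^-$ and at $-w_1^+$ respectively (with $q_2 = q_0 = 0$), $c_3, c_4$ the analogous vectors for $q_2 \in \{w_2^-, -w_2^+\}$, and $c_5, c_6$ those for $q_0 \in \{w_0^-, -w_0^+\}$. Since $M$ is linear, $M(\R^6_{\ge 0}) = \mathrm{cone}(c_1, \dots, c_6)$ and $M(\R^6_{>0})$ is its relative interior. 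By the disjointness proposition of \S\ref{sec:notation}, every element of $M(\R^6_{>0})$ is a proper affine deformation, so by Proposition~\ref{CDG2HX} we get $M(\R^6_{\ge 0}) \subseteq \overline{\mathbf{Q}}$, and $\overline{\mathbf{Q}}$ is pointed; hence $\mathrm{cone}(c_1, \dots, c_6)$ contains no line, its projectivization $\overline{\mathbf{P_1}}$ is a genuine convex polygon, namely $\mathrm{conv}\{[c_1], \dots, [c_6]\} \subseteq \mathbf{Q}$, with at most six vertices.

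Next I would compute the $c_j$ explicitly. Realizing $R_X = \spine(w_1)$, $R_Y = \spine(w_2)$ and the point symmetry $\iota_0$ as linear maps of $\mathfrak{psl}(2,\R) \cong \R^{2,1}$ gives explicit matrices for $X = R_X\iota_0$, $Y = \iota_0 R_Y$, $A = R_X R_Y$, hence for the unit neutral eigenvectors $X^0, Y^0, A^0$. The cocycle associated to a triple $(q_1, q_2, q_0)$ with $q_i \in V(w_i)$ is the one whose value on each generator is forced by the side-pairing relations of the hyperideal triangle in configuration $I$, and its Margulis invariants are then obtained from $\alpha_{[u]}(g) = (g q g^{-1} - q)\cdot g^0$ evaluated at any convenient point $q$. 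Carrying this out — a finite computation in $\R^3$ treating $d, u_1, u_2, \theta$ as constants — yields the explicit $3\times 6$ matrix with columns $c_1, \dots, c_6$, whose entries are elementary functions of $u_1, u_2, d, \theta$; in particular one reads off that the $c_j$ span $\R^3$, so the cone is three-dimensional and $\overline{\mathbf{P_1}}$ is a genuine planar polygon.

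The heart of the proof is to show that $\mathrm{conv}\{[c_1], \dots, [c_6]\}$ has exactly five vertices. From the explicit vectors one first checks, by computing the signs of the $3\times 3$ determinants $\det(c_i, c_j, c_k)$, that five of the six points — after reindexing, $[c_1], \dots, [c_5]$ — occur in this cyclic order with all consecutive turns of the same sign, so that they are the vertices of a convex pentagon $\mathbf{P}$. One then shows that the sixth point $[c_6]$ lies in $\mathbf{P}$, e.g.\ by exhibiting $c_6$ as a nonnegative combination of two of $c_1, \dots, c_5$ (so that the ray through $c_6$ is redundant among the generators of the cone). It follows that $\overline{\mathbf{P_1}} = \mathbf{P}$ is a pentagon. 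As a byproduct, several of the supporting lines of $\mathbf{P}$ turn out to be the lines $\alpha_{[u]}(X) = 0$, $\alpha_{[u]}(Y)=0$, $\alpha_{[u]}(A)=0$, $\alpha_{[u]}(B)=0$ cutting out $\mathbf{Q}$, which is the reason $\mathbf{P_1}$ will later be seen to be inscribed in $\mathbf{Q}$.

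I expect the real obstacle to be organizational rather than conceptual: the column vectors $c_j$ and the sign pattern of the determinants depend on $(d, u_1, u_2, \theta)$, and one must verify that the combinatorial type of the polygon is constant over the admissible parameter region of \S\ref{sec:param-ultr-triangl} — in particular that no vertex collides with another and that the redundant generator stays redundant throughout, so that the pentagon never degenerates. Choosing a convenient normalization of the lightlike vectors $w_i^{\pm}$ and tracking the signs of the relevant $3\times 3$ determinants carefully is where the effort lies.
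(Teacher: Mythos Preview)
Your approach is correct in outline and would succeed if carried through, but it is more computational than the paper's and misses the one structural observation that dissolves your main worry.  The paper decomposes $M$ into three $3\times 2$ blocks $M_1, M_2, M_3$ (columns coming from $w_1, w_2, w_0$ respectively) and computes each block via closed formulas of the type $\alpha(X) = 2(q_1-q_0)\cdot X^0$.  The heart of the argument is then a rank computation: $M_1$ and $M_2$ have rank~$2$, while $M_3$ has rank~$1$.  The latter follows from the identity $\det M_3' = -4\,(X^0\boxtimes Y^0)\cdot w_3$, which vanishes precisely because the geodesic $\ell_0 = w_3^\perp$ was chosen to pass through $p_0 = \Axis(X)\cap\Axis(Y)$, so that $w_3 \perp (X^0\boxtimes Y^0)$.

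Thus the two columns $c_5$ and $c_6$ are \emph{projectively equal}, not merely that one lies in the cone generated by the other five.  The six generators collapse to five distinct projective points, and those are the vertices of $\mathbf{P_1}$.  Your expectation that the redundant $[c_6]$ sits somewhere inside the pentagon is the wrong picture; the coincidence $[c_5]=[c_6]$ is an exact identity valid for every admissible $(d,u_1,u_2,\theta)$.  This is what makes the combinatorial type constant across the parameter region --- exactly the issue you flagged as the ``real obstacle'' --- and it replaces your determinant-by-determinant sign analysis with a single geometric fact about $p_0$.  (Your plan to verify convex position of the five surviving vertices is, on the other hand, a point the paper treats rather briskly.)
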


\begin{figure}
  \centering
  \def\svgwidth{0.55\columnwidth}
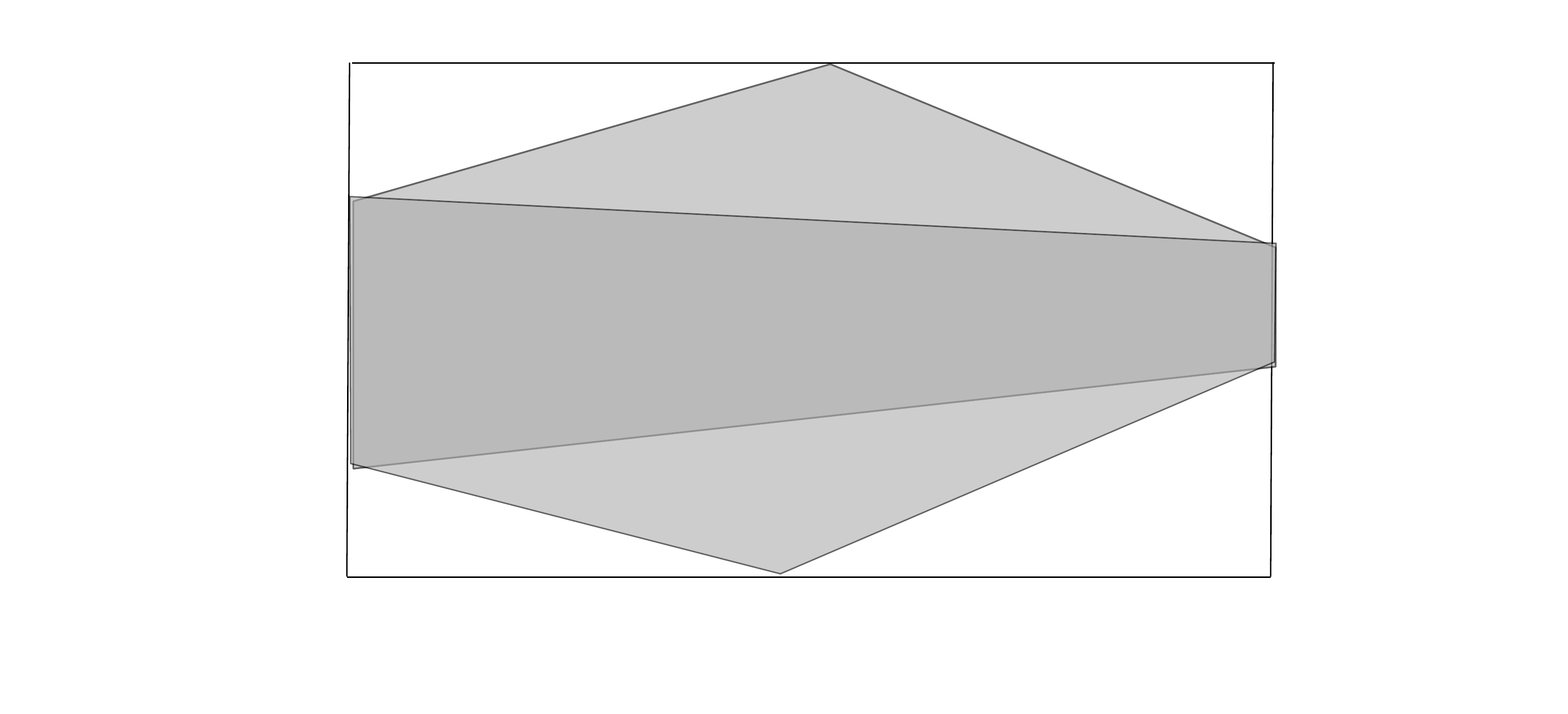  
\label{fig:hexagon}
  \caption{The hexagon $\mathbf{H}$ as the union of pentagons $\mathbf{P_1}$, $\mathbf{P_2}$ intersecting in $\mathbf{Q_{small}}$.}
\end{figure}

\begin{figure}
  \centering
\includegraphics[scale=.6]{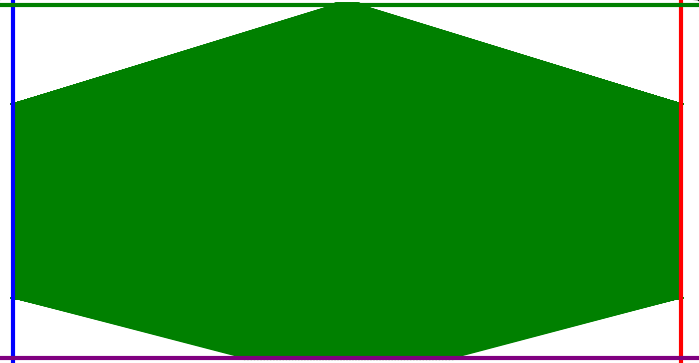}
  \caption{Allowing $\theta$ to vary gives a union of two hexagons, forming an octagon inscribed in $\mathbf{Q}$}.
\label{fig:octagon}
\end{figure}

Notice here the asymmetry between $\alpha (B)$ and $\alpha (A)$.  Specifically, $\alpha (A)$ can vanish, but there is no situation in which $\alpha (B) = 0$ while the other Margulis invariants remain nonzero.  This is an artifact of working with a fundamental domain in the configuration $I$.  Such a fundamental domain is asymmetric with respect to $A$ and $B$: it contains a self-loop at $B$ but not one at $A$.  We can recover the inherent symmetry of the problem by considering the map $\phi$ that achieves the diagonal flip $I \to II$.  The fundamental domain $II$ contains a self-loop at $A$ but not at $B$.

\begin{prop}
\label{pentagon-flip}
Consider an automorphism of the fundamental group $\phi \in \Aut (\pi)$ defined on generators by $\phi (X) = X$, $\phi(Y) = Y^{-1}$.  The automorphism extends to the Coxeter group as
\begin{align}
  \label{eq:1}
  \phi(R_X) &= R_X\\
\phi(R_Y) &= \iota_0 R_Y \iota_0\\
\phi(\iota_0) &= \iota_{0}
\end{align}
Then the image of $\phi$ is another inscribed pentagon $\mathbf{P_2}$ such that $\mathbf{P_1} \cap \mathbf{P_2}$ is a quadrilateral $\mathbf{Q_{small}}$ inscribed in the larger quadrilateral $\mathbf{Q}$.
\end{prop}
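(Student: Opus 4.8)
The plan is to show that the automorphism $\phi$ of Proposition~\ref{pentagon-flip} induces a linear automorphism $\Phi$ of $H^1(\Gamma_0,\R^{2,1})\cong\R^3$ which carries the cone of crooked fundamental domains for configuration $I$ to the cone for configuration $II$, and then to analyze the image pentagon directly using the coordinates $(\mu(X),\mu(Y),\mu(A))$. First I would record how $\phi$ acts on the generating Margulis functionals. Since $\phi(X)=X$ we get $\mu_{\phi^*[u]}(X)=\mu_{[u]}(X)$; since $\phi(Y)=Y^{-1}$ and the Margulis invariant is odd under inversion ($X^0$ flips sign, $u(X^{-1})=-X^{-1}u(X)$, so $\alpha(X^{-1})=-\alpha(X)$), we get $\mu_{\phi^*[u]}(Y)=-\mu_{[u]}(Y)$. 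The third coordinate requires expressing $\phi(A)=\phi(XY)=XY^{-1}$ in terms of the basis $X,Y,A$; here I would use the relations $A=XY$, $B=Y^{-1}X$ together with the additivity/trace identities for the Margulis invariant along products (as used in~\cite{charette2011finite,CDG1HT}) to write $\alpha(XY^{-1})$ as an affine-linear — in fact linear, since everything is a cohomological functional — combination of $\alpha(X),\alpha(Y),\alpha(A)$, and similarly to check what $\phi(B)$ becomes. Once $\Phi$ is written as an explicit $3\times3$ matrix, projectivizing gives a projective transformation of $\RP^2$.

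Next I would identify the four lines of $\mathbf{Q}$ with the vanishing loci $\alpha(X)=0$, $\alpha(Y)=0$, $\alpha(A)=0$, $\alpha(B)=0$ — this is exactly Proposition~\ref{CDG2HX}, which says $\mathbf{Q}$ is the region where all four have a common sign. The pentagon $\mathbf{P_1}$ from Theorem~\ref{pentagon} is cut out inside $\mathbf{Q}$ by one additional edge (the image of a facet of the positive orthant in $\R^6$ under $M$); I would locate that fifth edge concretely from the columns of the matrix $M$ of~(\ref{alpha-matrix}), using the stem-quadrant generators $w_i^{\pm}$ and the vectors $w_1,w_2,w_0$ from the parametrization in Section~\ref{sec:param-ultr-triangl}. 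Then $\mathbf{P_2}=\Phi(\mathbf{P_1})$ is another pentagon; because $\Phi$ permutes the coordinate/relation hyperplanes (fixing $\alpha(X)=0$, swapping the roles of $A$ and $B$ via the self-loop structure, negating $\alpha(Y)$), its four "old" edges again lie on lines of $\mathbf{Q}$, so $\mathbf{P_2}$ is also inscribed in $\mathbf{Q}$, and its fifth edge is $\Phi$ of the fifth edge of $\mathbf{P_1}$. Finally $\mathbf{P_1}\cap\mathbf{P_2}$: four of its bounding lines are lines of $\mathbf{Q}$, but I must check that the two "fifth edges" (of $\mathbf{P_1}$ and of $\mathbf{P_2}$) do not contribute extra sides to the intersection — i.e. that each pentagon's fifth edge lies outside the other pentagon, or is cut off by $\mathbf{Q}$'s edges. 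If that holds, $\mathbf{Q_{small}}=\mathbf{P_1}\cap\mathbf{P_2}$ is bounded by exactly four of the lines of $\mathbf{Q}$, hence a quadrilateral inscribed in $\mathbf{Q}$ (sharing $\mathbf{Q}$'s edge-lines but with different, interior, vertices), and we are done.

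The main obstacle I anticipate is the combinatorial bookkeeping in the last step: verifying that the union $\mathbf{P_1}\cup\mathbf{P_2}$ is the hexagon $\mathbf{H}$ and the intersection is exactly a quadrilateral, rather than, say, a pentagon or a more degenerate polygon. This amounts to checking the relative position of the two "diagonal" edges (the fifth edges of the two pentagons) with respect to all four sides of $\mathbf{Q}$ and to each other — a finite but delicate inequality computation in the explicit coordinates, where signs of $2\times2$ minors built from $w_1,w_2,w_0$ and the $w_i^{\pm}$ must be pinned down using the "consistently oriented" hypotheses and the disjointness Proposition. A secondary, lesser difficulty is getting the linear algebra of $\Phi$ exactly right, in particular confirming that $\phi$ genuinely acts linearly (not merely affinely) on $H^1$ and that the extension formulas~(\ref{eq:1}) for $\phi$ on $R_X,R_Y,\iota_0$ are compatible with the index-two relations of Section~\ref{sec:2hx-fund-doma}; this is routine but must be done carefully since the asymmetry between $A$ and $B$ is precisely what drives the result.
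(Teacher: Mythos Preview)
There is a concrete error: the Margulis invariant satisfies $\alpha(g^{-1})=\alpha(g)$, not $-\alpha(g)$. The paper states this explicitly in its proof of the proposition (``the Margulis invariant satisfies $\alpha(Y^{-1})=\alpha(Y)$ for any hyperbolic element $Y$''), and it is the standard convention: with $(g^{-1})^0=-g^0$ and $u(g^{-1})=-g^{-1}u(g)$, the two sign flips cancel. Your claim that $\mu_{\phi^*[u]}(Y)=-\mu_{[u]}(Y)$ is therefore wrong, and the matrix $\Phi$ you would build from it would reflect across the line $\alpha(Y)=0$, sending $\mathbf{P_1}$ outside $\mathbf{Q}$ rather than to an inscribed pentagon.

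Beyond the sign, the paper's argument is organized quite differently and is much shorter. It does \emph{not} compute an automorphism $\Phi$ of $H^1$ and push $\mathbf{P_1}$ through it. Instead it simply redoes the stem--quadrant computation with the configuration~$II$ generators $\phi(R_X),\phi(R_Y),\phi(\iota_0)$, obtaining a new linear map $M^{\phi}:\R^6\to H^1$. Because $\phi$ swaps $A$ and $B$ and fixes (the Margulis invariants of) $X$ and $Y$, the formulas of Lemma~\ref{alpha-compute} for configuration~$II$ are obtained from those for configuration~$I$ by replacing $\alpha(A)$ with $\alpha(B)$. The crucial observation is then that the blocks coming from the two reflection walls are unchanged, $M_1^{\phi}v+M_2^{\phi}w=M_1 v+M_2 w$, so the images of $e_1,\dots,e_4$ are \emph{literally the same four points} in both pentagons; this common quadrilateral is $\mathbf{Q_{small}}$ by definition. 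Only the rank-one block $M_3^{\phi}$ differs from $M_3$: its one-dimensional image now lies on the line $\alpha(B)=0$ instead of $\alpha(A)=0$, giving the fifth vertex of $\mathbf{P_2}$ on the opposite side of $\mathbf{Q}$.

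In particular, the ``main obstacle'' you anticipate---a delicate inequality check on the relative position of the two fifth edges---never arises in the paper's argument: since four of the five vertices of $\mathbf{P_1}$ and $\mathbf{P_2}$ coincide on the nose, $\mathbf{P_1}\cap\mathbf{P_2}=\mathbf{Q_{small}}$ is immediate. Your route through an explicit $3\times3$ matrix $\Phi$ and trace identities for $\alpha(XY^{-1})$ is salvageable once the sign is corrected, but it manufactures exactly the combinatorial difficulty that the paper's direct recomputation of $M^{\phi}$ avoids.
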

This automorphism switches the boundary components: $\phi (A) = B$, $\phi (B) = A$.

Together with the parametrization of hyperideal triangles, Theorem~\ref{hexagon} establishes Theorem~\ref{main}.

\section{Proofs of the Theorems}
\label{sec:proofs-two-theorems}

Denote the action of $\PSL (2,\R)$ on $\mathfrak{psl} (2,\R)$ by $(g,p) \mapsto g.p = gpg^{-1}$.

We now prove Theorem~\ref{pentagon} by explicitly computing $M$.  Recall the vectors $q_1,q_{2}, q_0$ defined above.
\begin{lem}
\label{alpha-compute}
We can compute the Margulis invariants as
\begin{align*}
\alpha (X) &= 2 (q_1 - q_0) \cdot X^0\\
\alpha (Y) &= 2 (q_0 - q_2) \cdot Y^0\\
\alpha (A) &= 2(q_1 - q_2) \cdot A^0\\
\alpha  (B) &= 2 (q_2 - \iota_0^{-1}q_1 \iota_0) \cdot B^0\\
\end{align*}
\end{lem}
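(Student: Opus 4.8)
The plan is to use the coordinate-free formula for the Margulis invariant stated in the Notation section: for any nonzero $p \in \R^{2,1}$ and hyperbolic $g \in \Gamma_0$, one has $\alpha_{[u]}(g) = (g.p - p) \cdot g^0$, where $g.p = gpg^{-1}$ and $g^0$ is the chosen unit spacelike $1$-eigenvector. The cocycle $[u]$ is the one determined by the triple of vertices $q_1 \in V(w_1)$, $q_2 \in V(w_2)$, $q_0 \in V(w_3)$: concretely, the spine reflections $R_X = \spine(w_1)$, $R_Y = \spine(w_2)$, and the point symmetry $\iota_0$ are given affine extensions with translational parts dictated by the vertices $q_1, q_0, q_0$ (the crooked planes $\mathscr{C}(w_1,q_1)$, $\mathscr{C}(w_2,q_2)$, $\mathscr{C}(w_3,q_0)$ bounding the fundamental domain). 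So the first step is to pin down, from the description of $\mathscr{D}_\theta$ and the pairing of sides, exactly which affine reflection is associated to each generator and where its "center" sits; this identifies $u(R_X)$, $u(R_Y)$, $u(\iota_0)$ in terms of $q_1, q_2, q_0$.

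Next I would compute the translational parts of the composite words $X = R_X \iota_0$, $Y = \iota_0 R_Y$, $A = R_X R_Y$, and $B = R_Y \iota_0 R_X \iota_0$ using the cocycle relation $u(gh) = u(g) + g.u(h)$. Since each affine generator is a linear involution conjugated so that a specific point is fixed (the wing vertex), its action on $\R^{2,1}$ can be written as $v \mapsto L(v - c) + c$ for $L$ the linear part and $c$ the fixed point; the translational part as a cocycle over $\Gamma_0$ is then $u(g) = c - L c = (I - L)c$. Plugging into the formula $\alpha(g) = (g.p - p)\cdot g^0$ with the convenient choice $p = $ one of the vertices, the linear parts telescope and one is left with a difference of two vertex vectors paired against $g^0$. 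For $X$ this should collapse to $2(q_1 - q_0)\cdot X^0$ (the factor $2$ coming from $g^0 \cdot g^0 = 1$ normalization together with the spine-reflection formula $\spine(u): v \mapsto -v + 2\frac{v\cdot u}{u\cdot u}u$, which has the $2$ built in), and similarly for $Y$ and $A$. For $B$ the word is longer and involves conjugating $R_X$ by $\iota_0$, which is exactly why the vertex $q_1$ appears in the conjugated form $\iota_0^{-1} q_1 \iota_0$ rather than as $q_1$ itself.

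The main obstacle I anticipate is bookkeeping the conjugations and the orientation/sign conventions: getting the $2$ to come out uniformly, making sure the right-handed basis convention for $w_i^\pm$ and the sign in $q_i = u_i^- w_i^- - u_i^+ w_i^+$ interact correctly, and — most delicately — verifying that the evaluation is independent of the auxiliary point $p$ chosen, which is the statement that ensures the answer only depends on $[u]$. A clean way to handle this is to pick $p$ adapted to each generator separately (e.g. for $\alpha(X)$ choose $p$ so that one of the two terms $g.p$, $p$ lies on the appropriate wing and the spine-reflection algebra simplifies), rather than one global $p$. The remaining computations are routine linear algebra in $\mathfrak{psl}(2,\R)$ with the trace form, using that $\Ad_g$ fixes $g^0$ and that $\spine(w_i)$ and $\iota_0$ are order-two; once the telescoping is set up correctly each of the four identities drops out.
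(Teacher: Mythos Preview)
Your plan is correct and matches the paper's approach: the paper likewise uses the formula $\alpha(g) = (g.p - p)\cdot g^0$ with a basepoint $p$ chosen to be fixed by an initial subword (e.g.\ $p = \iota_0^{-1}.q_1$ for $B$, so that $\iota_0^{-1}R_X\iota_0$ acts trivially on it), then applies the remaining spine reflection about its own vertex to produce the doubled difference, noting that the residual $w_i$-component is orthogonal to $g^0$. Your anticipation of the conjugated vertex in the $B$ case and of the adapted-basepoint trick is exactly the mechanism the paper employs; the only minor remark is that the factor $2$ arises directly from $\spine(w)(c+v) = c - v \bmod w$ rather than from the normalization of $g^0$.
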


\begin{proof}
We prove the result for $\alpha (B)$.  The rest are similar, but simpler.  Recall 
\[B = Y^{-1}X = (\iota_0 R_y)^{-1}X = R_y\iota_0^{-1}R_x\iota_0.\]  Also note that $\alpha (B) = (B.x - x, B^0)$ where $x \in \R^3$ is any nonzero vector.  It is convenient to compute $B.x$ where 
\[x = \iota_0^{-1} .q_1 = \iota_0^{-1}q_1 \iota_0.\]  This is because since $R_x$ fixes $q_{1}$, $\iota_0^{-1}R_x \iota_0$ fixes $\iota_0^{-1} q_1 \iota_{0}$.

\begin{align*}
(Y^{-1}X) (\iota_0^{-1}.q_1) &= \left( R_y \iota_0^{-1}R_x \iota_0 \right) (\iota_0^{-1}.q_1)\\
&= (R_y) (\iota_0^{-1}.q_1)\\
&= (R_y)\left( q_2 +  (\iota_0^{-1}.q_1 - q2)\right)\\
&= q_2 - (\iota_0^{-1}.q_1 - q2) \mod w_2
\end{align*}
Compare with~\cite{CDG1HT}. Since $w_2 \perp B^{0}$, this term vanishes in the computation of the Margulis invariant.

Then
\begin{align*}
\alpha (B) &= \left (B.(\iota_0^{-1}.q_1) - \iota_0^{-1}.q_1 \right ) 
\cdot B^0\\
&= \left ( q_2 - \iota_0^{-1}.q_1 + q_{2}  - \iota_0^{-1}.q_{1}\right ) \cdot B^0\\
&= 2 (q_2 - \iota_0^{-1}.q_1) \cdot B^0
\end{align*}
as desired.
\end{proof}

 View the direct sum of stem quadrants $V (w_1) \oplus V (w_2) \oplus V (w_0)$, as the positive orthant in $\R^6$.  We can decompose $M$ as
\[ M = M_1
\begin{pmatrix}
  u_1^-\\ u_1^{+} \\
\end{pmatrix}
+ M_2 \begin{pmatrix}
  u_2^-\\ u_2^{+} \\
\end{pmatrix}
+ M_3 \begin{pmatrix}
  u_3^-\\ u_3^{+} \\
\end{pmatrix}
\]
We now compute the matrices $M_1$, $M_2$, and $M_3$.

If $u_2^{\pm} = 0$ and $u_3^{\pm} = 0$, then $Mv = M_1 \begin{pmatrix}
  u_1^-\\ u_1^{+} \\
\end{pmatrix}$.  Let $e_1$, $e_2$ be the standard basis vectors of $\R^2$.  Then
\begin{align*}
\textsc{M}_{1}e_1 =
\begin{pmatrix}
  \alpha (X)\\
\alpha (Y)\\
\alpha (A)
\end{pmatrix}
& = 
\begin{pmatrix}
 2 (q_1 - q_0) \cdot X^0\\
 2 (q_0 - q_2) \cdot Y^0\\
2(q_1 - q_2) \cdot A^0
  \end{pmatrix}\\
&=
\begin{pmatrix}
 2 q_1 \cdot X^0\\
0\\
2 q_1 \cdot A^0\\
\end{pmatrix}
\\
&=\begin{pmatrix}
  2 w_1^- \cdot X^0\\
0\\
2w_1^- \cdot A^0
\end{pmatrix}
\end{align*}

Similarly, 
$M_{1}e_2 =
\begin{pmatrix}
  -2w_1^+ \cdot X^{0}\\
0\\
-2w_1^+ \cdot A^0
\end{pmatrix}
$.

For a general vector $
$ $u_1^- e_1 + u_1^{+} e_2$.  We compute $M_1$ 
\begin{align*}
 M_1 
&=
2 \begin{pmatrix}
 w_1^- \cdot X^0 & -w_1^+ \cdot X^0\\
0 & 0 \\
w_1^- \cdot A^0 & - w_1^+ \cdot A^0 
\end{pmatrix}
\end{align*}
The remaining matrices  $M_2$ and $M_3$ are analogous.  Explicitly:
\[M_2 = 2
\begin{pmatrix}
  0 & 0 \\
- W_2^- \cdot Y^0 & w_2^+ \cdot Y^0\\
- w_2^- \cdot A^0 & w_2^+ \cdot A^0\\
\end{pmatrix} 
\]

\[M_3 = 2\begin{pmatrix}
-w_3^- \cdot X^0 & w_3^+ \cdot X^0\\
 w_3^- \cdot Y^0&   -w_3^+ \cdot Y^0\\
0  & 0  
\end{pmatrix}
 \]

Define
\[v_{1 } \defeq \begin{pmatrix}
  u_1^-\\ u_1^{+} \\
\end{pmatrix}; v_{2 } \defeq \begin{pmatrix}
  u_2^-\\ u_2^{+} \\
\end{pmatrix}; v_{3 } \defeq \begin{pmatrix}
  u_3^-\\ u_3^{+} \\
\end{pmatrix}; 
\]

\begin{prop}
As above, define $M$ by 
\[Mv = M_1v_1 + M_2v_2 + M_3 v_3\]
Then $M_1$ and $M_2$ have rank 2 and $M_3$ has rank 1.  \end{prop}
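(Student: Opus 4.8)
The plan is to read off the ranks directly from the explicit $3\times 2$ matrices just computed, using the geometry of the configuration $I$ triangle to verify the required non-vanishings. For a $3 \times 2$ matrix, rank $2$ is equivalent to its two columns being linearly independent, and rank $1$ (it is visibly nonzero) is equivalent to its two columns being proportional; so in each case the computation reduces to checking a single $2\times 2$ determinant (or, for $M_3$, a proportionality).

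For $M_1$, the middle row is zero, so I would show that the $2\times 2$ minor formed by the first and third rows is nonzero, i.e.
\[
\left(w_1^- \cdot X^0\right)\left(w_1^+ \cdot A^0\right) - \left(w_1^+ \cdot X^0\right)\left(w_1^- \cdot A^0\right) \neq 0.
\]
The key point is that $X^0$ and $A^0$ are linearly independent spacelike vectors (the axes of $X=R_X\iota_0$ and $A=R_XR_Y$ are distinct geodesics, since in configuration $I$ the triangle sides $\ell_X$, $\ell_Y$, $\ell_0$ are pairwise ultraparallel), while $w_1^-, w_1^+$ span the lightcone section of the plane $w_1^\perp = \ell_X$ and hence span a $2$-dimensional subspace transverse to neither. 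Concretely, writing the linear functional $p \mapsto (p\cdot X^0, p\cdot A^0)$ restricted to $\mathrm{span}(w_1^-,w_1^+)$, this map is injective precisely when $X^0,A^0$ are not both Lorentz-orthogonal to a common vector in that plane; since $w_1^\perp$ is a single geodesic and $X^0, A^0$ are non-proportional, the only way the minor vanishes is if $\mathrm{span}(w_1^-,w_1^+) \subseteq \mathrm{span}(X^0,A^0)^{\perp\perp}$, a degenerate coincidence excluded by the ultraparallel hypothesis (equivalently, $\ell_X$ is not the common perpendicular of $\Axis(X)$ and $\Axis(A)$). The argument for $M_2$ is identical with $X^0$ replaced by $Y^0$ and $w_1$ by $w_2$, using that $\Axis(Y)$ and $\Axis(A)$ are distinct.

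For $M_3$, whose middle row is likewise structured but whose \emph{last} row is zero, rank $1$ means the two columns
$\bigl(\begin{smallmatrix} -w_3^-\cdot X^0 \\ w_3^-\cdot Y^0 \end{smallmatrix}\bigr)$ and $\bigl(\begin{smallmatrix} w_3^+\cdot X^0 \\ -w_3^+\cdot Y^0 \end{smallmatrix}\bigr)$ are proportional, i.e.
\[
(w_3^-\cdot X^0)(w_3^+\cdot Y^0) - (w_3^+\cdot X^0)(w_3^-\cdot Y^0) = 0.
\]
Here I would exploit that $w_3 = w_0 = (\cos\theta,\sin\theta,0)^{\top}$ lies in the plane spanned by $X^0$ and $Y^0$: indeed $\Axis(X)=\ell_X^{\pm}$-type data and $\Axis(Y)$ both pass through $p_0=0$, so their $1$-eigenvectors $X^0, Y^0$ together with $w_0$ all lie in the $2$-plane $\{z=0\}$ (the third coordinate being $0$ in all of $w_0$, $X^0$, $Y^0$ by the normalization $p_0 = 0$ with the axes in the horizontal plane). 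Consequently $w_0^\pm$ also lie in $\{z=0\}^{\perp}$-related configuration making the functionals $p\mapsto p\cdot X^0$ and $p\mapsto p\cdot Y^0$ proportional on $\mathrm{span}(w_0^-,w_0^+)$, forcing the determinant above to vanish; nonvanishing of the matrix itself follows since $w_0^\pm \not\perp X^0$ (the geodesic $\ell_0$ is not the axis of $X$). I expect the main obstacle to be bookkeeping the sign and normalization conventions — in particular pinning down exactly why $X^0$, $Y^0$, and $w_0$ are coplanar from the stated normalization "$p_0 = 0$ with $\Axis(X)$, $\Axis(Y)$ in the horizontal axis", and confirming that the ultraparallel hypothesis on configuration $I$ is precisely what rules out the degenerate coincidences for $M_1$ and $M_2$; once the conventions are fixed, each rank claim is a one-line determinant check.
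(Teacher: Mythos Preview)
Your treatment of $M_3$ is essentially the paper's argument: both reduce to the observation that $X^0$, $Y^0$, and $w_3=w_0$ all lie in $p_0^\perp$, so $w_3\cdot(X^0\boxtimes Y^0)=0$ and the $2\times 2$ minor vanishes.

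Your argument for $M_1$ (and hence $M_2$) has a genuine gap, and in fact gets the geometry backwards. The $2\times 2$ minor equals, up to a nonzero constant, $w_1\cdot(X^0\boxtimes A^0)$; it vanishes precisely when $w_1$ is \emph{orthogonal} to $X^0\boxtimes A^0$, i.e.\ when $\ell_X$ is \emph{perpendicular to} the common perpendicular of $\Axis(X)$ and $\Axis(A)$ --- not when $\ell_X$ \emph{equals} that common perpendicular, as you assert. Moreover, your claim that ``$\ell_X$ is not the common perpendicular of $\Axis(X)$ and $\Axis(A)$'' is false: since $R_X X R_X = X^{-1}$ and $A=R_XR_Y$ with $\ell_X\perp\ell_Y$'s common perpendicular being $\Axis(A)$, the line $\ell_X=w_1^\perp$ is exactly that common perpendicular. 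This is the point the paper exploits: $X^0\boxtimes A^0=\lambda w_1$ for some $\lambda\neq 0$, so $w_1\cdot(X^0\boxtimes A^0)=\lambda\, w_1\cdot w_1\neq 0$ because $w_1$ is spacelike. Your two errors happen to cancel in the conclusion, but the reasoning as written does not establish rank $2$; you should replace the ``degenerate coincidence excluded'' sentence with the observation that $\ell_X$ \emph{is} the common perpendicular, hence $w_1\parallel X^0\boxtimes A^0$, hence the minor is a nonzero multiple of $\lVert w_1\rVert^2$.
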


\begin{proof}
  We first prove that $M_3$ has rank 1.  Recall the definition of $M_3$ as 
\[M_3 = 2
\begin{pmatrix}
-w_3^- \cdot X^0 & w_3^+ \cdot X^0\\
 w_3^- \cdot Y^0&   -w_3^+ \cdot Y^0\\
0  & 0  
\end{pmatrix}.
\]  Let $M_3^{\prime}$ be the $2 \times 2$ submatrix consisting of the nonzero entries in $M_3$.  Then we can write it as the product
\[M_3^{\prime} = 2\begin{pmatrix}
  X^0\\ Y^0
\end{pmatrix} \cdot
\begin{pmatrix}
  w_3^- &- w_3^+
\end{pmatrix}.
\]
The determinant is 
\begin{align*}
4(w_3^- \cdot X^0) (w_3^+ \cdot Y^0) - (w_3^- \cdot Y^0) (w_3^+ \cdot X^0) &= -4(X^0 \boxtimes Y^0 ) \cdot (w_3^- \boxtimes w_3^+) \\
&= -4(X^0 \boxtimes Y^0) \cdot w_3
\end{align*}
By construction, $w_3$ is a spacelike vector whose orthogonal space $w_3^{\perp}$ contains the timelike vector $p_0 = 
\begin{pmatrix}
  0\\0\\1
\end{pmatrix}
$ that spans the intersection of $(X^0)^{\perp} \cap (Y^0)^{\perp}$.  In particular, $w_3 \cdot (X^0 \boxtimes Y^0) = 0 $, and $M_3$ has determinant $0$.  Since $M_3$ is not the $0$ matrix, it has rank $1$.

We will show that $M_1$ has rank $2$.  The $M_2$ case is entirely similar.  As before, let $M_1^{\prime}$ be the $2 \times 2$ submatrix of $M_1$ consisting of nonzero entries.  As in the computation of $M_3^{\prime}$,
\[\frac{1}{4} \det M_1^{\prime} = -w_1 \cdot (X^{0} \boxtimes A^0).\]
Note that $X^{0} \boxtimes A^0$ is a spacelike vector since $X^0$ and $A^0$ are ultraparallel.  The spacelike vector spans the one-dimensional space $(X^0)^{\perp} \cap (A^0)^{\perp}$.  But as we noted above, the hyperbolic geodesic defined by the subspace $w_1^{\perp}$ is mutually perpendicular to the geodesics defined by  $(X^0)^{\perp}$ and $(A^0)^{\perp}$.  Thus 
\[X^0 \boxtimes A^0 = \lambda w_1\]
 for some nonzero $\lambda$.  Hence 
\[\frac{1}{4} \det M_1 = \lambda \|w_1\| \neq 0\]
and $M_1^{\prime}$ has full rank, so $M_1$ has rank $2$.
\end{proof}

\begin{prop}
Because $M_1$ and $M_2$ are matrices of full rank, the image of $e_1, \ldots, e_4$ are distinct.  Because $M_{3}$  is a rank-1 matrix, $\mathsf{M}e_5 = \mathsf{M}e_6$.  As a result, the image of the positive orthant in $\R^6$ projectivizes to a pentagon $\mathbf{P}_1$ in $\RP^2$.

\end{prop}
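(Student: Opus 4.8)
The plan is to regard the statement as a fact about the image cone $\mathcal{C}\defeq M(\R^6_{\ge 0})$ and its image in $\RP^2$. Since the positive orthant is the convex cone generated by $e_1,\dots,e_6$, linearity gives $\mathcal{C}=\R_{\ge 0}Me_1+\dots+\R_{\ge 0}Me_6$, and the image in $\RP^2$ of a pointed polyhedral cone of dimension $\ge 2$ that lies in an open half-space of $\R^3$ is a convex polygon whose vertices are precisely the projectivizations of the extreme rays. So it suffices to (i) reduce the six generators to five, (ii) check that $\mathcal{C}$ is pointed and lies in a half-space, and (iii) check that the five surviving rays are extreme, i.e.\ that the corresponding five points of $\RP^2$ are in convex position.

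For (i): by the preceding proposition $M_3$ has rank $1$, so its columns $Me_5,Me_6$ are proportional, and I would verify the proportionality is \emph{positive} — equivalently that the kernel of the $2\times2$ block of $M_3$ avoids both open quadrants, equivalently that $w_0^-\cdot X^0$ and $w_0^+\cdot X^0$ have opposite signs. This holds because $\ell_0=w_0^\perp$ passes through $p_0=\Axis(X)\cap\Axis(Y)$ and hence crosses $\Axis(X)=(X^0)^\perp$ transversally, so its ideal endpoints $w_0^\pm$ lie on opposite sides of that geodesic; thus $[Me_5]=[Me_6]$ and $\mathcal{C}=\R_{\ge 0}Me_1+\dots+\R_{\ge 0}Me_5$. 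For (ii): the open orthant maps to cocycles whose three crooked planes $\mathscr{C}(w_1,q_1),\mathscr{C}(w_2,q_2),\mathscr{C}(w_0,q_0)$ are disjoint (the disjointness criterion above), hence to proper affine deformations; by Proposition~\ref{CDG2HX} the coordinates $(\alpha(X),\alpha(Y),\alpha(A))$ all carry one nonzero sign, which is locally constant, hence constant, on the connected open orthant. Replacing $[u]$ by $-[u]$ if needed and taking closures, $\mathcal{C}$ lies in the closed octant $\{\alpha(X)\ge0,\ \alpha(Y)\ge0,\ \alpha(A)\ge0\}$ — a pointed cone, being the standard positive octant of $H^1(\Gamma_0,\R^{2,1})\cong\R^3$ — whose projectivization is a triangle $T$ with sides on the lines $L_X=\{\alpha(X)=0\}$, $L_Y=\{\alpha(Y)=0\}$, $L_A=\{\alpha(A)=0\}$.

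For (iii): from the explicit matrices computed above, the $\alpha(Y)$-coordinate of $Me_1,Me_2$ vanishes, the $\alpha(X)$-coordinate of $Me_3,Me_4$ vanishes, and the $\alpha(A)$-coordinate of $Me_5=Me_6$ vanishes, so $[Me_1],[Me_2]\in L_Y$, $[Me_3],[Me_4]\in L_X$, $[Me_5]\in L_A$; moreover each lies in the \emph{relative interior} of its side of $T$ — e.g.\ $Me_1=2(w_1^-\cdot X^0,\,0,\,w_1^-\cdot A^0)$ with both nonzero entries nonzero, since $\ell_X=w_1^\perp$ is the common perpendicular of, hence crosses, $\Axis(X)$ and $\Axis(A)$, and similarly for $Me_2,Me_3,Me_4,Me_5$. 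Also $[Me_1]\ne[Me_2]$ and $[Me_3]\ne[Me_4]$ because $M_1,M_2$ have rank $2$, while any coincidence between two points lying on different ones of $L_X,L_Y,L_A$ would put the common point at a vertex of $T$, contradicting relative-interiority; so the five points are pairwise distinct. The proof then closes with elementary plane geometry: five distinct points on $\partial T$, distributed $2+2+1$ over its sides and each in the relative interior of its side, are automatically in convex position, since no three are collinear — a line through two of them lying on different sides meets $T$ in exactly the segment joining them, which touches $\partial T$ only at those two points, while each of $L_X,L_Y,L_A$ carries only two of the five — so the convex hull is a pentagon, namely $\mathbf{P_1}$.

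I expect (ii) to be the one real obstacle: one must know that the image cone does not spill across the coordinate hyperplanes, and this is exactly where properness (Proposition~\ref{CDG2HX}) together with connectedness of the orthant is needed; the nondegeneracy inputs in (iii) reduce, just as in the rank computation, to the transversality built into a hyperideal triangle, and the concluding convexity argument is routine.
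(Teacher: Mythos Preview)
Your argument is correct and follows the same basic line as the paper's proof---reduce the six generating rays to five via the rank of $M_3$ and read off the polygon---but you have supplied considerably more than the paper does. The paper's own proof is a two-line sketch: it simply asserts that $Me_5=Me_6$, that the other $Me_i$ are distinct, and that ``five distinct vectors in $\R^3$ projectivize to a pentagon.'' You correctly note that rank one only gives proportionality of $Me_5$ and $Me_6$ and verify the sign, you invoke Proposition~\ref{CDG2HX} plus connectedness of the orthant to show the image cone is pointed (contained in a coordinate octant), and you give a clean convex-position argument by locating the five points in the relative interiors of the sides of the triangle $T=\{\alpha(X)\ge0,\ \alpha(Y)\ge0,\ \alpha(A)\ge0\}$. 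These steps are genuine content that the paper leaves implicit; in particular your observation that the open segment between two points on adjacent sides of $T$ lies in the interior of $T$ is exactly what is needed to rule out a hidden collinearity among $Me_1,\dots,Me_5$, and the paper does not address this at all. So: same approach, but your version actually closes the gaps.
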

\begin{proof}
It is clear that $Me_5 = Me_6$, but the images $Me_i$ for $i \neq 5,6$ are distinct.  This gives $5$ distinct vectors in $\R^3$, which projectivize to  a pentagon in $\RP^2$.
\end{proof}

$\mathbf{P}_1$ is inscribed in $\mathbf{Q}$:
\begin{prop}
Let $\left\{ e_i \right\}$ be the standard basis vectors for $\R^6$.  Then
\begin{itemize}
\item $M e_1$ and $M e_2$  lie in $\ker \alpha_{[u]} (Y)$
\item $M e_3$ and $M e_4$ lie in $\ker \alpha_{[u]} (X)$
\item $M e_5$ and $M e_6$ lie in $\ker \alpha_{[u]} (A)$
\end{itemize}
In particular, the pentagon  $\mathbf{P}_1$ is inscribed in the quadrilateral $\mathbf{Q}$ defined by the projectivized images of the kernels of $\alpha_u (X)$, $\alpha_u (Y)$, $\alpha_u (A)$, $\alpha_u (B)$.
\end{prop}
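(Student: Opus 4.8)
The plan is to verify each of the three bullet points by direct computation from the explicit matrices $M_1$, $M_2$, $M_3$ found above, and then to deduce the inscription statement. For the first bullet, $Me_1 = M_1 e_1$ and $Me_2 = M_1 e_2$ by construction, and the middle entry of $M_1$ (the $\alpha(Y)$-coordinate) is identically zero; hence both vectors have vanishing $Y$-coordinate, i.e. lie in $\ker \alpha_{[u]}(Y)$ under the identification $H^1(\Gamma_0,\R^{2,1}) \cong \R^3$ via $[u] \mapsto (\mu_{[u]}(X), \mu_{[u]}(Y), \mu_{[u]}(A))$. Symmetrically, $Me_3 = M_2 e_1$ and $Me_4 = M_2 e_2$ have vanishing first coordinate because the top row of $M_2$ is zero, so they lie in $\ker \alpha_{[u]}(X)$; and $Me_5 = M_3 e_1$, $Me_6 = M_3 e_2$ have vanishing third coordinate since the bottom row of $M_3$ is zero, so they lie in $\ker \alpha_{[u]}(A)$.

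Next I would explain why this forces $\mathbf{P}_1$ to be inscribed in $\mathbf{Q}$. The quadrilateral $\mathbf{Q}$ is cut out projectively by the four lines $\{\alpha_{[u]}(X) = 0\}$, $\{\alpha_{[u]}(Y) = 0\}$, $\{\alpha_{[u]}(A) = 0\}$, $\{\alpha_{[u]}(B) = 0\}$; by Proposition~\ref{CDG2HX} the open region where all four Margulis invariants share a common sign is exactly the interior of $\mathbf{Q}$. Each vertex $[Me_i]$ of the pentagon is, by the bullets just proved, a boundary point of the positive cone $M(\R_{>0}^6)$ lying on one of the coordinate hyperplanes $\{\alpha(X)=0\}$, $\{\alpha(Y)=0\}$, or $\{\alpha(A)=0\}$; since the positive cone also has all Margulis invariants nonnegative (it is the image of the positive orthant and each row of each $M_i$ is an inner product realizing a Margulis invariant of a consistently oriented configuration), each $[Me_i]$ lies on an edge of $\mathbf{Q}$ rather than outside it. Thus all five vertices of $\mathbf{P}_1$ sit on $\partial \mathbf{Q}$, which is the definition of being inscribed.

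The one point needing genuine care — the main obstacle — is checking that the five vertices $[Me_i]$ actually lie on the \emph{sides} of $\mathbf{Q}$ (closed edges) and not merely on the extended lines containing those sides, and moreover that the pentagon they span lies inside $\mathbf{Q}$. This reduces to sign information: one must confirm that, for the configuration of type $I$ with parameters $d, u_1, u_2, \theta$ in their admissible ranges, the relevant inner products $w_1^- \cdot X^0$, $w_1^- \cdot A^0$, $w_2^\pm \cdot Y^0$, etc., have signs consistent with the $w_i$ being consistently oriented (the inequalities $v_i \cdot v_j < 0$, $v_i \cdot v_j^\pm \le 0$ from the Notation section). Granting the consistent-orientation hypothesis built into the parametrization of hyperideal triangles, each coordinate of each $Me_i$ is a sum of nonnegative Margulis-invariant contributions, so $Me_i$ lies in the closed cone dual to $\mathbf{Q}$, and the convex hull of the $[Me_i]$ is inscribed as claimed. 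I would close by remarking that the asymmetry noted in the text — that $\alpha(A)$ can vanish on $\mathbf{P}_1$ while $\alpha(B)$ cannot — is visible here precisely because two vertices lie on $\{\alpha(A) = 0\}$ but none lies on $\{\alpha(B) = 0\}$, matching the self-loop at $B$ in configuration $I$.
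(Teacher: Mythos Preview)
Your proposal is correct and follows the same approach as the paper: read off the vanishing entries directly from the explicit matrices $M_1$, $M_2$, $M_3$. The paper's own proof is a single sentence (``This follows easily from the formulas''), so your first paragraph already matches it; your additional discussion of why the vertices land on the actual sides of $\mathbf{Q}$ rather than only on the extended lines is a welcome elaboration that the paper does not spell out.
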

\begin{proof}
This follows easily from the formulas.
\end{proof}
In the above proposition $Me_5 = Me_6$, and this point corresponds to the top of the pentagon in the diagram. The image of the other basis vectors gives a smaller quadrilateral $\mathbf{Q_{small}}$ inscribed in $\mathbf{Q}$.
To form the hexagon, we need to recover the corresponding singular point at the bottom.  We do this by proving Proposition~\ref{pentagon-flip}.
\begin{proof}~\ref{pentagon-flip}
Using a fundamental domain in configuration $II$ corresponds to using $\phi (R_X)$, $\phi (R_Y)$ and $\phi (\iota_0)$ as the generators.  This, in turn, corresponds to considering the image of the positive orthant in $\R^6$ under the matrix
\[M^{\phi } v \defeq  \begin{pmatrix}
\label{alpha-phi-matrix} 
  \alpha_{[u]} \phi (X)\\ \alpha_{[u]} \phi ( Y)\\ \alpha_{[u]} \phi ( A)
\end{pmatrix} = 
\begin{pmatrix}
  \alpha_{[u]} (X)\\ \alpha_{[u]} (Y)\\ \alpha_{[u]} (B)
\end{pmatrix} \]
The equality on $Y$ is due to the fact that the Margulis invariant satisfies $\alpha (Y^{-1}) = \alpha (Y)$ for any hyperbolic element $Y$.

Using $M^{\phi}$ in place of $M$ swaps the roles of $A$ and $B$ in Lemma~\ref{alpha-compute}.
Following the argument for $\mathbf{P_1}$, we get maps $M^{\phi}_1$, $M^{\phi}_2$ and $M^{\phi}_3$.  For any vectors $v,w$ $M_1^{\phi}v + M_2^{\phi}w = M_1v + M_2 w$. This image is the quadrilateral $\mathbf{Q_{small}}$.

Like $M_3$, $M_3^{\phi}$ has a one-dimensional image.  However, its image corresponds to a point on the line $\alpha_{[u]} (B) = 0$, giving another pentagon $\mathbf{P_2}$ with a vertex on the line in $\RP^2$ corresponding to the image of $\ker \alpha_{[u]} (B)$ and such that $\mathbf{P_1} \cap \mathbf{P_2} = \mathbf{Q_{small}}$.

\end{proof}

\bibliography{master}

\end{document}